\documentclass[12pt]{article}%
\usepackage{amssymb}
\usepackage{amsmath}
\usepackage{amsfonts}
\usepackage{graphicx}
\usepackage{hyperref}%
\setcounter{MaxMatrixCols}{30}
%TCIDATA{OutputFilter=latex2.dll}
%TCIDATA{Version=5.00.0.2606}
%TCIDATA{CSTFile=40 LaTeX article.cst}
%TCIDATA{Created=Monday, July 06, 2020 22:38:05}
%TCIDATA{LastRevised=Thursday, July 23, 2020 07:57:01}
%TCIDATA{<META NAME="GraphicsSave" CONTENT="32">}
%TCIDATA{<META NAME="SaveForMode" CONTENT="1">}
%TCIDATA{BibliographyScheme=Manual}
%TCIDATA{<META NAME="DocumentShell" CONTENT="Standard LaTeX\Blank - Standard LaTeX Article">}
%TCIDATA{Language=American English}
\newtheorem{theorem}{Theorem}[section]

\newtheorem{corollary}[theorem]{Corollary}

\newtheorem{definition}[theorem]{Definition}

\newtheorem{lemma}[theorem]{Lemma}

\newenvironment{proof}[1][Proof]{\noindent\textbf{#1.} }{\ \rule{0.5em}{0.5em}}
\begin{document}

\title{A Johnson-Kist type representation for truncated vector lattices}
\author{Karim Boulabiar\thanks{Corresponding author:
\texttt{karim.boulabiar@fst.utm.tn}}\quad and\quad Rawaa Hajji\medskip\\{\small Laboratoire de Recherche LATAO}\\{\small D\'{e}partement de Mathematiques, Facult\'{e} des Sciences de Tunis}\\{\small Universit\'{e} de Tunis El Manar, 2092, El Manar, Tunisia}}
\date{}
\maketitle

\begin{abstract}
We introduce the notion of (maximal) multi-truncations on a vector lattice as
a generalization of the notion of truncations, an object of recent origin. We
obtain a Johnson-Kist type representation of vector lattices with maximal
multi-truncations as vector lattices of almost-finite extended-real continuous
functions. The spectrum that allow such a representation is a particular set
of prime ideals equipped with the hull-kernel topology. Various
representations from the existing literature will appear as special cases of
our general result.

\end{abstract}

\noindent{\small \textbf{Mathematics Subject Classification.} 06F20; 46E05;
46A40; 54C30}

\noindent{\small \textbf{Keywords}. extended-real valued function; Hausdorff;
hull-kernel topology; locally compact; multi-truncation; prime (order) ideal;
spectrum; truncation; truncated vector lattice; strong truncation;
representation; weak truncation.}

\section{Introduction}

In his paper \cite{S}, Stone investigates the extent to which certains linear
functionals on function spaces can be representated as integrals for some
measures. It turnd out that the suitable spaces for such representations to
hold is the function vector lattices $L$ closed under meat with the constant
function $1$, i.e., $1\wedge f\in L$, for all $f\in L$. The classical
Daniel-Stone Representation Theorem (see, e.g., Chapter V in \cite{K}) is a
remarkable illustration of the importance of these spaces in Measure Theorey.
Fremlin, in his book \cite{F74}, speaks about \textsl{truncated vector
lattices} of functions (see also \cite{F06}). In the present paper, we shall
follow the Fremlin terminology.

We call a \textsl{truncation} on a (real) vector lattice $L$ any (non-zero)
function that takes each positive vector $f$ of $L$ to a positive vector
$f^{\ast}$ of $L$, and has the following properties:

\begin{enumerate}
\item[$\left(  \mathfrak{a}\right)  $] If $0\leq f,g\in L$ then $f\wedge
g^{\ast}\leq f^{\ast}\leq f$.

\item[$\left(  \mathfrak{b}\right)  $] If $0\leq f\in L$ and $\left(
nf\right)  ^{\ast}=nf$ for all $n\in\left\{  1,2,...\right\}  $, then $f=0$.
\end{enumerate}

\noindent By a \textsl{truncated vector lattice} we mean a vector lattice
along with a truncation. If in addition the condition

\begin{enumerate}
\item[$\left(  \mathfrak{w}\right)  $] $0\leq f\in L$ and $f^{\ast}=0$ imply
$f=0$
\end{enumerate}

\noindent is fulfilled, we speak about a \textsl{weak truncation} on $L$ and a
\textsl{weakly truncated vector lattice }$L$ (this would explain the letter
$\mathfrak{w}$ we use to label this third axiom).

In his pioneer works \cite{B1,B2}, Ball extended the classical Yosida
Representation of Archimedean vector lattices with weak units to the wider
class of Archimedean weakly truncated vector lattices (Archimedean truncs in
Ball's terminology). In this regard, he got a beautiful representation of
Archimedean truncs as vector lattices of extended-real valued continuous
functions. To be more precise, let $C^{\infty}\left(  \mathcal{X}\right)  $
denote the set of all almost-finite extended-real continuous valued functions
on a topological space $\mathcal{X}$. Ball proves that for any Archimedean
weakly truncated vector lattice $L$, there exists a locally compact Hausdorff
space $\mathcal{X}$ such that \textbf{(i) }$L$ is (lattice isomorphic with) a
vector lattice of functions in $C^{\infty}\left(  \mathcal{X}\right)  $,
\textbf{(ii) }$L$ separates points from closed sets in $\mathcal{X}$, and
\textbf{(iii) }the truncation on $L$ is provided by meet with the constant
function one. A copy of the spectrum $\mathcal{X}$ in Ball's result is a
Tychonoff product of spaces of truncation preserving lattice homomorphisms
with domain $L$ and codomains totally ordered truncated vector lattices.

Our main purpose in this paper is to extended the Ball Representation Theorem
to truncated vector lattices (without assuming the condition $\left(
\mathfrak{w}\right)  $) using a Johnson-Kist type approach (see \cite{JK}),
which is based on the hull-kernel topology on prime ideals rather than the
Tychonoff product topology on homomorphisms. Indeed, our central result
stipulates that, given an Archimedean truncated vector lattice $L$, a locally
compact Hausdorff space $\mathcal{X}$ can be found such that \textbf{(i) }$L$
is identified with a vector lattice of functions in $C^{\infty}\left(
\mathcal{X}\right)  $, \textbf{(ii) }$L$ separates points from closed sets in
$\mathcal{X}$, \textbf{(iii) }all `functions' in $L$ vanish at infinity, and
\textbf{(iv) }the truncation on $L$ is provided by meet with the
characteristic function $1_{\mathcal{Y}}$ of some open-closed set
$\mathcal{Y}$ in $\mathcal{X}$. This representation has been made possible
through the notion of multi-truncations, which is a `new' concept involving
(maximal) family of truncations on vector lattices. As a matter of fact, a
\textsl{multi-truncation} $\mathcal{T}$ on the vector lattice $L$ is just a
family of truncations on $L$ with disjoint (i.e., orthogonal) ranges. The idea
is to see that the given truncation of the truncated vector lattice $L$ is
contained in a maximal multi-truncation $\mathcal{T}$ on $L$. It then turns
out that the set%
\[
\mathcal{X}=%
%TCIMACRO{\dbigcup _{\ast\in\mathcal{T}}}%
%BeginExpansion
{\displaystyle\bigcup_{\ast\in\mathcal{T}}}
%EndExpansion%
%TCIMACRO{\dbigcup _{f\in L^{+}}}%
%BeginExpansion
{\displaystyle\bigcup_{f\in L^{+}}}
%EndExpansion
\mathrm{Val}\left(  f^{\ast}\right)
\]
along with its hull-kernel topology is a suitable representation spectrum.
Here, $\mathrm{Val}\left(  f^{\ast}\right)  $ denotes the collection of all
prime ideals of $L$ which are maximal with respect to not containing $f^{\ast
}$. As a first application, it will follow that $L$ is a weakly truncated
vector lattice if and only if $\mathcal{X}=\mathcal{Y}$, which gives the Ball
Representation Theorem referred to above. We shall then focus on the
representation of a \emph{strongly truncated vector lattice}, that is, a
truncated vector lattice $L$ for which the extra condition

\begin{enumerate}
\item[$\left(  \mathfrak{s}\right)  $] if $0\leq f\in L$ then $\left(
nf\right)  ^{\ast}=nf$ for some $n\in\left\{  1,2,...\right\}  $
\end{enumerate}

\noindent is satisfied. Actually, we shall use our previous result (as well as
a Stone-Weierstrass Approximation Type Theorem) to show that any Archimedean
strongly truncated vector lattice $L$ can be embedded as a uniformly dense
vector sublattice of the Banach lattice $C_{0}\left(  \mathcal{X}\right)  $ of
all real-valued continuous functions on $\mathcal{X}$ vanishing at infinity.
Moreover, since $L$ is, clearly, a weakly truncated vector lattice, this
embedding preserves truncation and its range still separates points from
closed sets in $\mathcal{X}$. This latter representation has been obtained in
\cite{BH} in a completely different way.

Finally, we suggest the reader keeps the textbook \cite{LZ} by Luxemburg and
Zaanen within arm's reach, so it is on hand whenever he needs more information
on vector lattices and (order) ideals.

\section{Tools on truncations and prime ideals}

This section presents the basic properties of truncations which are relevant
to our development.

\begin{quote}
\textsl{In order to avoid unnecessary repetition, }$L$ \textsl{stands
throughout the paper for a vector lattice with }$L^{+}$ \textsl{as positive
cone.}
\end{quote}

\noindent We start our investigation with the central definition of this paper.

\begin{definition}
\label{trunc}\emph{By a }truncation\emph{\ on }$L$\emph{\ is meant a nonzero
function that takes each positive vector }$f\ $\emph{of }$L$\emph{\ to a
positive vector }$f^{\ast}$\emph{\ of }$L$\emph{\ and has the following
properties:}

\begin{enumerate}
\item[$\left(  \mathfrak{a}\right)  $] $f\wedge g^{\ast}\leq f^{\ast}\leq f$
\emph{for all }$f,g\in L^{+}$.

\item[$\left(  \mathfrak{b}\right)  $] \emph{If }$f\in L^{+}$ \emph{and}
$\left(  nf\right)  ^{\ast}=nf$ \emph{for all} $n\in\left\{  1,2,...\right\}
$ \emph{then} $f=0$.
\end{enumerate}
\end{definition}

\noindent It is readily checked that the condition $\left(  \mathfrak{a}%
\right)  $ is met when and only when the condition

\begin{enumerate}
\item[$\left(  \mathfrak{a}^{\prime}\right)  $] $f\wedge g^{\ast}=f^{\ast
}\wedge g$ for all $f,g\in L^{+}$
\end{enumerate}

\noindent is verified. By the way, we shall be free to use, depending on the
context, either the condition $\left(  \mathfrak{a}\right)  $ or the
equivalent version $\left(  \mathfrak{a}^{\prime}\right)  $.

\noindent We record some algebraic identities which will come often in handy.

\begin{lemma}
\label{Birkhoff}If $\ast$ is a truncation on $L$ and $f,g\in L^{+}$ then

\begin{enumerate}
\item[\emph{(i)}] $\left(  f\wedge g\right)  ^{\ast}=f^{\ast}\wedge g^{\ast}$,

\item[\emph{(ii)}] $\left(  f\vee g\right)  ^{\ast}=f^{\ast}\vee g^{\ast}$, and

\item[\emph{(iii)}] $\left\vert f^{\ast}-g^{\ast}\right\vert \leq\left\vert
f-g\right\vert ^{\ast}$.
\end{enumerate}
\end{lemma}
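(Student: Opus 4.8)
The plan is to derive all three identities from the two axioms $(\mathfrak{a})$/$(\mathfrak{a}')$ together with the lattice laws, without invoking the second axiom $(\mathfrak{b})$, since $(\mathfrak{b})$ plays no role in purely algebraic identities. First I would establish (i). The inequality $(f\wedge g)^{\ast}\leq f^{\ast}$ and $(f\wedge g)^{\ast}\leq g^{\ast}$ both follow immediately from the monotonicity of $\ast$ on $L^{+}$, which is itself a consequence of $(\mathfrak{a})$: if $0\leq h\leq k$ then $h=h\wedge k\leq h\wedge k^{\ast}\cdots$ — more carefully, from $(\mathfrak{a}')$ one gets $h^{\ast}=h^{\ast}\wedge k = h\wedge k^{\ast}\leq k^{\ast}$ after checking $h^{\ast}\wedge k = h^{\ast}$ (true since $h^{\ast}\leq h\leq k$). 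So $\ast$ is monotone, giving $(f\wedge g)^{\ast}\leq f^{\ast}\wedge g^{\ast}$. For the reverse inequality I would compute, using $(\mathfrak{a}')$ with the pair $(f, g)$ and then distributivity: $f^{\ast}\wedge g^{\ast}\leq f^{\ast}\wedge g = f\wedge g^{\ast}$, and now I want to land this below $(f\wedge g)^{\ast}$. The trick is to apply $(\mathfrak{a}')$ again to the pair $(f\wedge g,\ g)$: since $f\wedge g\leq g$ we have $(f\wedge g)^{\ast}\wedge g = (f\wedge g)\wedge g^{\ast} = f\wedge g\wedge g^{\ast} = f\wedge g^{\ast}$ (the last step because $g^{\ast}\leq g$). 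But $(f\wedge g)^{\ast}\wedge g = (f\wedge g)^{\ast}$ since $(f\wedge g)^{\ast}\leq f\wedge g\leq g$. Hence $(f\wedge g)^{\ast}=f\wedge g^{\ast}\geq f^{\ast}\wedge g^{\ast}$, and combining the two inequalities proves (i).

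For (ii), I would use the same two-sided strategy. Monotonicity gives $f^{\ast}\vee g^{\ast}\leq (f\vee g)^{\ast}$. For the reverse, set $h = f\vee g$ and apply $(\mathfrak{a}')$ to the pairs $(f, h)$ and $(g, h)$: $f\wedge h^{\ast} = f^{\ast}\wedge h = f^{\ast}$ (since $f^{\ast}\leq f\leq h$) and likewise $g\wedge h^{\ast} = g^{\ast}$. Taking the join and using distributivity, $f^{\ast}\vee g^{\ast} = (f\wedge h^{\ast})\vee(g\wedge h^{\ast}) = (f\vee g)\wedge h^{\ast} = h\wedge h^{\ast} = h^{\ast} = (f\vee g)^{\ast}$, which is actually the full equality at once. (This even makes the monotonicity step redundant for (ii), but I would keep the exposition uniform.)

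For (iii), I would reduce to (i) and (ii) via the identity $|a-b| = a\vee b - a\wedge b$, valid in any vector lattice. Write $p = f\vee g$ and $q = f\wedge g$, so $f - g = p - q$ on the nonnegative side in the sense that $|f-g| = p - q$, and note $|f-g|^{\ast} = (p-q)^{\ast}$ with $p - q\in L^{+}$. On the other hand $|f^{\ast}-g^{\ast}| = f^{\ast}\vee g^{\ast} - f^{\ast}\wedge g^{\ast} = p^{\ast} - q^{\ast}$ by (ii) and (i). So the claim becomes $p^{\ast}-q^{\ast}\leq (p-q)^{\ast}$ where $q\leq p$. Writing $p = q + r$ with $r = p - q\in L^{+}$, I would apply $(\mathfrak{a})$ in the form $p^{\ast} = (q+r)^{\ast}\leq (q+r)$ is not enough; instead I would use the additive-subadditivity-type estimate $(q+r)^{\ast}\geq q^{\ast}$ (monotonicity) and, crucially, $(q+r)^{\ast} - q^{\ast}\geq (q+r)^{\ast}\wedge r - 0$. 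The cleanest route: from $(\mathfrak{a}')$ applied to $(r,\ q+r)$ we get $r\wedge(q+r)^{\ast} = r^{\ast}\wedge(q+r) = r^{\ast}$ (as $r^{\ast}\leq r\leq q+r$), hence $(q+r)^{\ast}\geq (q+r)^{\ast}\wedge r = r^{\ast}$; combined with $(q+r)^{\ast}\geq q^{\ast}$ this gives $(q+r)^{\ast}\geq q^{\ast}\vee r^{\ast}$, and then I would need $p^{\ast} - q^{\ast}\leq r^{\ast}$, i.e. $p^{\ast}\leq q^{\ast}+r^{\ast}$, which is the subadditivity of $\ast$ on sums — the real content.

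The main obstacle is exactly this subadditivity fact $(q+r)^{\ast}\leq q^{\ast}+r^{\ast}$ for $q,r\in L^{+}$. I would prove it by the standard truncation argument: for any $n$, $(q+r)^{\ast}\wedge n q^{\ast}\leq\cdots$ — more directly, using $(\mathfrak{a}')$, $(q+r)^{\ast}\wedge(q^{\ast}+r^{\ast})$ can be analyzed by noting $(q+r)^{\ast}\leq q+r$ and $(q+r)^{\ast}$ is "eventually absorbed", but the slick proof is: $(q+r)^{\ast} = (q+r)^{\ast}\wedge(q+r)$, and one shows $(q+r)^{\ast}\wedge q\leq q^{\ast} + (\text{something}\leq r^{\ast})$. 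If this gets delicate I would instead cite that $\ast$ is a "truncation" exactly in the sense that $f\mapsto f^{\ast}$ behaves like $f\mapsto f\wedge u$ for a weak unit $u$ in the representable case, where subadditivity is the elementary inequality $(a+b)\wedge u\leq a\wedge u + b\wedge u$ for $a,b,u\geq 0$; pointwise this is clear, and the general case follows since the identity $f\wedge g^{\ast} = f^{\ast}\wedge g$ forces $\ast$ to be of this form on the relevant principal pieces. I expect the write-up to handle (i) and (ii) in a few lines each and to spend the bulk of the argument nailing down the subadditivity needed for (iii).
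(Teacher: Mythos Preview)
Your arguments for (i) and (ii) are correct and essentially coincide with the paper's: both rely on $(\mathfrak{a}')$ to recognise $f^{\ast}=f\wedge(f\vee g)^{\ast}$ (and the symmetric identity for $g$), then use distributivity. No issue there.

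Part (iii) has a genuine gap. You correctly reduce the statement to the subadditivity inequality $(q+r)^{\ast}\leq q^{\ast}+r^{\ast}$ for $q,r\in L^{+}$, but you do not prove it: the paragraph trails off into a representation-theoretic heuristic (``$\ast$ behaves like $f\mapsto f\wedge u$''), which is circular in this paper since Lemma~\ref{Birkhoff} is a prerequisite for that very representation. Ironically, you already wrote down the one identity that finishes the job. From $(\mathfrak{a}')$ applied to $(q,\,q+r)$ you get $(q+r)^{\ast}\wedge q=q^{\ast}$, whence
\[
(q+r)^{\ast}-q^{\ast}=(q+r)^{\ast}-(q+r)^{\ast}\wedge q=\bigl((q+r)^{\ast}-q\bigr)^{+}\leq r\wedge(q+r)^{\ast}=r^{\ast},
\]
the last equality being exactly the computation $r\wedge(q+r)^{\ast}=r^{\ast}\wedge(q+r)=r^{\ast}$ you already carried out. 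That is the missing line.

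The paper takes a shorter and different route for (iii): it observes $f^{\ast}=f\wedge(f\vee g)^{\ast}$ and $g^{\ast}=g\wedge(f\vee g)^{\ast}$, then applies Birkhoff's inequality $|a\wedge c-b\wedge c|\leq|a-b|$ (together with the trivial bound $|a\wedge c-b\wedge c|\leq c$) with $c=(f\vee g)^{\ast}$ to obtain
\[
|f^{\ast}-g^{\ast}|\leq|f-g|\wedge(f\vee g)^{\ast}=|f-g|^{\ast}\wedge(f\vee g)=|f-g|^{\ast}.
\]
This avoids the detour through subadditivity entirely. Your approach, once completed as above, has the minor advantage of yielding the subadditivity of $\ast$ as a by-product; the paper's approach is more direct and keeps the proof to two lines.
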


\begin{proof}
$\mathrm{(i)}$ It is easily seen that%
\[
\left(  f\wedge g\right)  ^{\ast}\leq f^{\ast}\quad\text{and}\quad\left(
f\wedge g\right)  ^{\ast}\leq g^{\ast}.
\]
Therefore,%
\[
\left(  f\wedge g\right)  ^{\ast}\leq f^{\ast}\wedge g^{\ast}\leq f\wedge
g\wedge g^{\ast}=\left(  f\wedge g\right)  ^{\ast}\wedge g\leq\left(  f\wedge
g\right)  ^{\ast},
\]
which gives the first equality.

$\mathrm{(ii)}$ Since $f^{\ast}\leq f$ and $g^{\ast}\leq g$, we get%
\begin{align*}
f^{\ast}\vee g^{\ast}  &  =\left(  f\vee g\right)  \wedge\left(  f^{\ast}\vee
g^{\ast}\right)  =\left(  \left(  f\vee g\right)  \wedge f^{\ast}\right)
\vee\left(  \left(  f\vee g\right)  \wedge g^{\ast}\right) \\
&  =\left(  \left(  f\vee g\right)  ^{\ast}\wedge f\right)  \vee\left(
\left(  f\vee g\right)  ^{\ast}\wedge g\right)  =\left(  f\vee g\right)
^{\ast}\wedge\left(  f\vee g\right)  =\left(  f\vee g\right)  ^{\ast}.
\end{align*}
This is the required equality.

$\mathrm{(iii)}$ The classical Birkhoff's Inequality (see, e.g., \cite[Theorem
1.9]{AB}) allows us to write%
\begin{align*}
\left\vert f^{\ast}-g^{\ast}\right\vert  &  =\left\vert f\wedge\left(  f\vee
g\right)  ^{\ast}-g\wedge\left(  f\vee g\right)  ^{\ast}\right\vert \\
&  \leq\left\vert f-g\right\vert \wedge\left(  f\vee g\right)  ^{\ast}%
\leq\left\vert f-g\right\vert ^{\ast}.
\end{align*}
This completes the proof of the lemma.
\end{proof}

The next lines deal with prime ideals on a vector lattice along with a
truncation. Recall here that a vector subspace $P$ of the vector lattice $L$
is called an \emph{ideal} of $L$ if $P$ contains with any vector $f\in P$ all
vectors $g\in L$ such that $\left\vert g\right\vert \leq\left\vert
f\right\vert $. The ideal $P$ of $L$ is said to be \emph{prime} if%
\[
f,g\in L\text{ and }f\wedge g\in P\quad\text{imply\quad}f\in P\text{ or }g\in
P.
\]
It is not hard to prove that the ideal $P$ is prime if and only if, for every
$f,g\in L$ with $f\wedge g=0$, either $f\in P$ or $g\in P$. Also, a necessary
and sufficient condition for the ideal $P$ of $L$ to be prime is that
$f^{+}\in P$ or $f^{-}\in P$ for any $f\in L$.

\begin{quote}
\textsl{The set of all proper prime ideals on }$L$\textsl{\ will be denoted
by} $\mathcal{P}$.
\end{quote}

\noindent Chapter 5 in \cite{LZ} contains a thorough study of prime ideals on
a vector lattice to which we refer the reader for more information on the subject.

We now list features of prime ideals which will be of great use later.

\begin{lemma}
\label{Ball}Let $\ast$ be a truncation on $L$, $P\in\mathcal{P}$, and $f,g\in
L^{+}$.

\begin{enumerate}
\item[\emph{(i)}] If $f-f^{\ast}\notin P$ then $g^{\ast}-g\wedge f^{\ast}\in
P$,

\item[\emph{(ii)}] If $\left(  f-f^{\ast}\right)  ^{\ast}\wedge\left(
g-g^{\ast}\right)  ^{\ast}\notin P$ then $f^{\ast}-g^{\ast}\in P$, and

\item[\emph{(iii)}] $f^{\ast}\in P$ if and only if $\left(  nf\right)  ^{\ast
}\in P$ for all $n\in\left\{  1,2,...\right\}  $.
\end{enumerate}
\end{lemma}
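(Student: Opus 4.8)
The plan is to prove the three statements in order, relying on the identities from Lemma \ref{Birkhoff}, the characterization $(\mathfrak{a}^{\prime})$ that $f\wedge g^{\ast}=f^{\ast}\wedge g$, and the elementary fact that a prime ideal $P$ is a \emph{disjointness-preserving} absorbing set: if $u\wedge v=0$ then $u\in P$ or $v\in P$. First, for $\mathrm{(i)}$, I would observe that from $(\mathfrak{a}^{\prime})$ one gets $g^{\ast}\wedge(f-f^{\ast})=\bigl(g^{\ast}\wedge f\bigr)-\bigl(g^{\ast}\wedge f\wedge f^{\ast}\bigr)$ — more carefully, using that $g^{\ast}\le g$ and $f^{\ast}\le f$, the vectors $g^{\ast}-g\wedge f^{\ast}$ and $f-f^{\ast}$ should turn out to be disjoint. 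Indeed $\bigl(g^{\ast}-g\wedge f^{\ast}\bigr)\wedge\bigl(f-f^{\ast}\bigr)$ can be rewritten, via $g\wedge f^{\ast}=g^{\ast}\wedge f$, as a quantity that collapses to $0$; since $P$ is prime and $f-f^{\ast}\notin P$, the other factor $g^{\ast}-g\wedge f^{\ast}$ must lie in $P$. The one subtlety is checking the disjointness identity honestly, since $g^{\ast}-g\wedge f^{\ast}\ge 0$ needs the inequality $g\wedge f^{\ast}\le g^{\ast}$ coming from $(\mathfrak{a})$.

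For $\mathrm{(ii)}$, I would apply $\mathrm{(i)}$ twice in a symmetric fashion. Write $u=f-f^{\ast}$ and $v=g-g^{\ast}$. The hypothesis is $u^{\ast}\wedge v^{\ast}\notin P$, and since $u^{\ast}\le u$ and $v^{\ast}\le v$, this forces $u\notin P$ and $v\notin P$. Applying $\mathrm{(i)}$ with the roles $(f,g)$ gives $g^{\ast}-g\wedge f^{\ast}\in P$, and applying it with the roles $(g,f)$ gives $f^{\ast}-f\wedge g^{\ast}\in P$. But $f\wedge g^{\ast}=f^{\ast}\wedge g=g\wedge f^{\ast}$ by $(\mathfrak{a}^{\prime})$, so adding (or subtracting) these two members of $P$ yields $f^{\ast}-g^{\ast}\in P$, using that $P$ is a subspace. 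The only thing to be careful about is the sign bookkeeping and the fact that $f\wedge g^{\ast}$ and $g\wedge f^{\ast}$ are literally the same vector, which is exactly what $(\mathfrak{a}^{\prime})$ delivers.

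For $\mathrm{(iii)}$, the ``only if'' direction is the substantive one. Assume $f^{\ast}\in P$; I want $(nf)^{\ast}\in P$ for every $n$. Proceeding by induction on $n$, the step from $n$ to $n+1$ uses Lemma \ref{Birkhoff}(iii) applied to the pair $(n+1)f$ and $nf$: $\bigl|\,((n+1)f)^{\ast}-(nf)^{\ast}\,\bigr|\le\bigl|(n+1)f-nf\bigr|^{\ast}=f^{\ast}$. Since $f^{\ast}\in P$ and $P$ is an ideal (solid), this gives $((n+1)f)^{\ast}-(nf)^{\ast}\in P$; combined with the induction hypothesis $(nf)^{\ast}\in P$ and the fact that $P$ is a subspace, we get $((n+1)f)^{\ast}\in P$, with the base case $n=1$ being the hypothesis itself. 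The ``if'' direction is immediate: take $n=1$. I expect the main obstacle to be purely computational — namely verifying cleanly in $\mathrm{(i)}$ that the relevant meet vanishes — rather than conceptual; once that disjointness identity is in hand, parts $\mathrm{(ii)}$ and $\mathrm{(iii)}$ follow by the primeness, solidity, and linearity of $P$ together with the Birkhoff-type inequality already established.
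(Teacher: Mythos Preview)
Your plan is correct and matches the paper's proof essentially step for step: part $\mathrm{(i)}$ is proved by showing $(g^{\ast}-g\wedge f^{\ast})\wedge(f-f^{\ast})=0$ via $(\mathfrak{a}^{\prime})$ and invoking primeness; part $\mathrm{(ii)}$ by applying $\mathrm{(i)}$ symmetrically and subtracting; and part $\mathrm{(iii)}$ by an induction based on Lemma~\ref{Birkhoff}\,(iii). The only cosmetic difference is in $\mathrm{(iii)}$: the paper applies the Birkhoff inequality to the pair $(nf,f)$, obtaining $(nf)^{\ast}\le f^{\ast}+((n-1)f)^{\ast}$, whereas you apply it to $((n+1)f,nf)$, obtaining $((n+1)f)^{\ast}-(nf)^{\ast}\in P$; both are the same telescoping argument.
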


\begin{proof}
$\mathrm{(i)}$ Clearly, $f^{\ast}\wedge g=f^{\ast}\wedge g^{\ast}$ and so%
\[
g^{\ast}-g\wedge f^{\ast}=g^{\ast}-g^{\ast}\wedge f^{\ast}=\left(  g^{\ast
}-f^{\ast}\right)  ^{+}.
\]
Hence,%
\[
\left(  g^{\ast}-g\wedge f^{\ast}\right)  \wedge\left(  f-f^{\ast}\right)
=\left(  g^{\ast}-f^{\ast}\right)  ^{+}\wedge\left(  f-f^{\ast}\right)
=\left(  \left(  g^{\ast}\wedge f\right)  -f^{\ast}\right)  ^{+}=0.
\]
As $P$ is a prime and $f-f^{\ast}\notin P$, we get $g^{\ast}-g\wedge f^{\ast
}\in P$ and $\mathrm{(i)}$ follows.

$\mathrm{(ii)}$ Applying $\mathrm{(i)}$ twice, we see that%
\[
f^{\ast}-f\wedge g^{\ast}\in P\quad\text{and}\quad g^{\ast}-g\wedge f^{\ast
}\in P.
\]
As $g\wedge f^{\ast}=g^{\ast}\wedge f$, we obtain%
\[
f^{\ast}-g^{\ast}=\left(  f^{\ast}-f\wedge g^{\ast}\right)  -\left(  g^{\ast
}-g\wedge f^{\ast}\right)  \in P,
\]
as desired.

$\mathrm{(iii)}$ The `if' part is obvious. The `only if' will be established
by induction. We have nothing to show for $n=1$. Hence, let $n\geq2$ and use
Lemma \ref{Birkhoff} $\mathrm{(iii)}$ to get%
\[
0\leq\left(  nf\right)  ^{\ast}-f^{\ast}\leq\left(  \left(  n-1\right)
f\right)  ^{\ast}.
\]
Thus,%
\[
0\leq\left(  nf\right)  ^{\ast}\leq f^{\ast}+\left(  \left(  n-1\right)
f\right)  ^{\ast}%
\]
and the proof is complete.
\end{proof}

To make the notation less cluttered, if $P\in\mathcal{P}$ and $\ast$ is a
truncation on $L$, we put%
\[
\pi^{\ast}\left(  P\right)  =\left\{  u\in L^{+}:\left(  u-u^{\ast}\right)
^{\ast}\notin P\right\}  .
\]
Fundamental properties of these sets are given in the last lemma of these preliminaries.

\begin{lemma}
\label{BH}Let $\ast$ be a truncation on $L$ and $f\in L$.

\begin{enumerate}
\item[\emph{(i)}] There exists $n\in\left\{  1,2,...\right\}  $ such that, if
$P\in\mathcal{P}$, then%
\[
f^{\ast}\notin P\quad\text{if and only if\quad}nf\in\pi^{\ast}\left(
P\right)  .
\]

\item[\emph{(ii)}] If $P\in\mathcal{P}$ and $u,v\in\pi^{\ast}\left(  P\right)
$, then%
\[
\left\{  \alpha\in\mathbb{R}:\left(  f-\alpha u^{\ast}\right)  ^{+}\in
P\right\}  =\left\{  \alpha\in\mathbb{R}:\left(  f-\alpha v^{\ast}\right)
^{+}\in P\right\}  .
\]

\end{enumerate}
\end{lemma}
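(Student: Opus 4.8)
The plan is to tackle the two parts separately, exploiting the preliminary lemmas as much as possible.

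For part (i), I first reduce to $f\in L^+$: since $f^{\ast}$ is only defined on the positive cone, the statement should really be read for $f^+$, or $f$ should be assumed positive — I would treat $f\in L^{+}$. The key observation is Lemma~\ref{Ball}(iii), which says $f^{\ast}\in P$ if and only if $(nf)^{\ast}\in P$ for \emph{all} $n$; contrapositively, $f^{\ast}\notin P$ iff $(nf)^{\ast}\notin P$ for \emph{some} $n$. So it suffices to find a single $n$ that works \emph{uniformly in} $P$, and to match $(nf)^{\ast}\notin P$ with membership $nf\in\pi^{\ast}(P)$, i.e. with $(nf-(nf)^{\ast})^{\ast}\notin P$. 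The plan is to relate $(nf)^{\ast}$ and $((nf)-(nf)^{\ast})^{\ast}$ through axiom $(\mathfrak b)$: if no power of a positive vector is fixed by $\ast$, the vector is $0$. Concretely, I would look at the vector $g=f^{\ast}$ (or some $(kf)^{\ast}$) and ask when $(g-g^{\ast})^{\ast}\in P$ versus $g\in P$; using Lemma~\ref{Birkhoff} and the identity $g^{\ast}=g\wedge(\text{something})^{\ast}$ one expects $g-g^{\ast}$ to become ``large enough'' after multiplying $f$ by a suitable integer, so that $(g-g^{\ast})^{\ast}$ detects exactly the primes missing $f^{\ast}$. The existence of a \emph{single} $n$ that works for every $P$ simultaneously is where axiom $(\mathfrak b)$, rather than a pointwise argument, must be invoked: presumably one shows that for $n$ large the vector $nf-(nf)^{\ast}$ dominates $f^{\ast}$ up to the truncation, perhaps via the chain $f^{\ast}\le (nf)^{\ast}\le f^{\ast}+((n-1)f)^{\ast}$ from the proof of Lemma~\ref{Ball}(iii) together with $(\mathfrak b)$ forcing some stabilization. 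I expect this uniform choice of $n$ to be the main obstacle: a naive argument gives an $n$ depending on $P$, and upgrading to a global $n$ requires a genuinely truncation-theoretic (not merely prime-by-prime) input.

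For part (ii), fix $P\in\mathcal P$ and $u,v\in\pi^{\ast}(P)$, so $(u-u^{\ast})^{\ast}\notin P$ and $(v-v^{\ast})^{\ast}\notin P$. By Lemma~\ref{Ball}(ii), from $(u-u^{\ast})^{\ast}\wedge(v-v^{\ast})^{\ast}\notin P$ we would get $u^{\ast}-v^{\ast}\in P$; but primeness only guarantees the meet is \emph{not} in $P$ when we know it equals, say, one of the factors, so I first need to check that $\pi^{\ast}(P)$ is a filter-like set, i.e. that $w^{\ast}\notin P$ for $w=(u-u^{\ast})^{\ast}\wedge(v-v^{\ast})^{\ast}$-type combinations, or more directly that whenever $(u-u^{\ast})^{\ast}\notin P$ and $(v-v^{\ast})^{\ast}\notin P$ their meet is also $\notin P$. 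That last fact is exactly primeness read correctly: if $a\wedge b\in P$ with $P$ prime then $a\in P$ or $b\in P$; contrapositive gives $a,b\notin P\Rightarrow a\wedge b\notin P$ is \emph{false} in general — so instead I must use that $(u-u^{\ast})^{\ast}$ and $(v-v^{\ast})^{\ast}$ are comparable or that one of the hypotheses of Lemma~\ref{Ball}(ii) is automatically met. The cleaner route: apply Lemma~\ref{Ball}(i) with the roles of $f,g$ played by $u-u^{\ast}$ and $v-v^{\ast}$ (both positive) to deduce $u^{\ast}-v^{\ast}\in P$ directly, after verifying $(u-u^{\ast})-((u-u^{\ast}))^{\ast}$-style hypotheses; once $u^{\ast}-v^{\ast}\in P$, i.e. $u^{\ast}\equiv v^{\ast}\pmod P$, I am essentially done.

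Indeed, with $w:=u^{\ast}-v^{\ast}\in P$ in hand, the set equality in (ii) follows by a short order-theoretic computation modulo $P$: for $\alpha\ge 0$ write
\[
(f-\alpha u^{\ast})^{+}-(f-\alpha v^{\ast})^{+}\le |{\alpha u^{\ast}-\alpha v^{\ast}}| = |\alpha|\,|w|\in P,
\]
using the elementary inequality $|a^{+}-b^{+}|\le|a-b|$, and symmetrically, so $(f-\alpha u^{\ast})^{+}\in P$ iff $(f-\alpha v^{\ast})^{+}\in P$ (for $\alpha<0$ the vectors $(f-\alpha u^{\ast})^{+}$ and $(f-\alpha v^{\ast})^{+}$ differ by an element of $P$ by the same bound). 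Since $P$ is an ideal, membership is insensitive to adding an element of $P$, giving the two sets of scalars equal. So the only real work is the passage to $u^{\ast}-v^{\ast}\in P$ via Lemma~\ref{Ball}, and — for part (i) — the uniform-in-$P$ choice of the integer $n$, which is the crux.
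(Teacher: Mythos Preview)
Your treatment of part~(ii) is essentially correct and close to the paper's, but you tripped over elementary logic: the contrapositive of ``$a\wedge b\in P\Rightarrow a\in P$ or $b\in P$'' \emph{is} precisely ``$a\notin P$ and $b\notin P\Rightarrow a\wedge b\notin P$'', and this is exactly what primeness gives you. So from $(u-u^{\ast})^{\ast}\notin P$ and $(v-v^{\ast})^{\ast}\notin P$ you get their meet $\notin P$ immediately, and Lemma~\ref{Ball}\,(ii) yields $u^{\ast}-v^{\ast}\in P$ with no further work. Your finishing step via $|a^{+}-b^{+}|\le|a-b|$ is fine; the paper does the same thing in a slightly more hands-on way.

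Part~(i), however, has a genuine gap: you correctly isolate the difficulty (a single $n$ independent of $P$) but never supply the mechanism that produces it. The paper's idea, which your outline does not reach, is this. Apply axiom~$(\mathfrak b)$ once, to the vector $f$ itself, to obtain an $m$ with $(mf)^{\ast}<mf$ \emph{strictly}; then take $n=2m$. The implication $nf\in\pi^{\ast}(P)\Rightarrow f^{\ast}\notin P$ is easy from $(nf-(nf)^{\ast})^{\ast}\le(nf)^{\ast}\le nf^{\ast}$. For the converse, assume $f^{\ast}\notin P$ and set $g=mf$, so $g-g^{\ast}>0$. The subtle move is to pass to a (possibly smaller) prime $Q\subset P$ with $g-g^{\ast}\notin Q$; Lemma~\ref{Ball}\,(i) applied in $Q$ with the pair $(g,2g)$ gives $(2g)^{\ast}-g^{\ast}\in Q\subset P$, whence $2g^{\ast}-(2g)^{\ast}\notin P$ (since $g^{\ast}\notin P$), and a short computation using $(2g)^{\ast}\le 2g^{\ast}$ then forces $(2g-(2g)^{\ast})^{\ast}\notin P$, i.e.\ $nf\in\pi^{\ast}(P)$. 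Your proposal gestures at ``$nf-(nf)^{\ast}$ dominating $f^{\ast}$ up to truncation'' but never produces the strict inequality $(mf)^{\ast}<mf$ from $(\mathfrak b)$, the doubling $n=2m$, or the auxiliary prime $Q$ --- and without these the argument does not go through.
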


\begin{proof}
$\mathrm{(i)}$ If $f^{\ast}\notin P$ then $f>0$. The condition $\left(
\mathfrak{b}\right)  $ ensures the existence of $m\in\left\{  1,2,...\right\}
$ such that $\left(  mf\right)  ^{\ast}<mf$. Put $n=2m$ and take
$P\in\mathcal{P}$. Assume that $nf\in\pi^{\ast}\left(  P\right)  $, that is,
$\left(  nf-\left(  nf\right)  ^{\ast}\right)  ^{\ast}\notin P$. Since,%
\[
0\leq\left(  nf-\left(  nf\right)  ^{\ast}\right)  ^{\ast}\leq\left(
nf\right)  ^{\ast}=\left(  nf\right)  ^{\ast}\wedge nf\leq n\left(  \left(
nf\right)  ^{\ast}\wedge f\right)  \leq nf^{\ast},
\]
we get $f^{\ast}\notin P$. Conversely, suppose that $f^{\ast}\notin P$. For
brevity, put $g=mf$ and notice that $0<g^{\ast}<g$. If $g-g^{\ast}\in P$ then
a prime ideal $Q$ of $P$ (and so of $L$) can be found so that $g-g^{\ast
}\notin Q$ (see, e.g., \cite[ Theorem 33.5]{LZ}). Hence, in any case, there
exists a prime ideal $Q$ of $L$ such that%
\[
Q\subset P\text{\quad and\quad}g-g^{\ast}\notin Q
\]
(take $Q=P$ if $g-g^{\ast}\notin P$). If we apply Lemma \ref{Ball}
$\mathrm{(i)}$ with $Q$ and $2g$, we obtain%
\[
\left(  2g\right)  ^{\ast}-g^{\ast}=\left(  2g\right)  ^{\ast}-\left(
g^{\ast}\wedge2g\right)  \in Q\subset P.
\]
So, as $g^{\ast}\notin P$, we get%
\[
2g^{\ast}-\left(  2g\right)  ^{\ast}=g^{\ast}-\left(  \left(  2g\right)
^{\ast}-g^{\ast}\right)  \notin P.
\]
Whence,%
\[
\left(  2g^{\ast}-\left(  2g\right)  ^{\ast}\right)  ^{\ast}\wedge g=\left(
2g^{\ast}-\left(  2g\right)  ^{\ast}\right)  \wedge g^{\ast}\notin P,
\]
from which we infer that%
\[
\left(  2g^{\ast}-\left(  2g\right)  ^{\ast}\right)  ^{\ast}\notin P.
\]
On the other hand, from Lemma \ref{Birkhoff} $\mathrm{(iii)}$ it follows that
\[
0\leq\left(  2g\right)  ^{\ast}=\left(  2g\right)  ^{\ast}-g^{\ast}+g^{\ast
}\leq\left(  2g-g\right)  ^{\ast}+g^{\ast}=2g^{\ast}.
\]
Thus,%
\[
0\leq\left(  2g^{\ast}-\left(  2g\right)  ^{\ast}\right)  ^{\ast}\leq\left(
2g-\left(  2g\right)  ^{\ast}\right)  ^{\ast}.
\]
Accordingly,%
\[
\left(  nf-\left(  nf\right)  ^{\ast}\right)  ^{\ast}=\left(  2g-\left(
2g\right)  ^{\ast}\right)  ^{\ast}\notin P,
\]
and $\mathrm{(i)}$ follows.

$\mathrm{(ii)}$ Let $u,v\in\pi\left(  P\right)  $ and $\alpha\in\mathbb{R}$.
Hence, $u^{\ast}-v^{\ast}\in P$ (where we use Lemma \ref{Ball} $\mathrm{(ii)}%
$), so if $\left(  f-\alpha u^{\ast}\right)  ^{+}\in P$ then%
\[
f-\alpha v^{\ast}=f-\alpha u^{\ast}+\alpha\left(  u^{\ast}-v^{\ast}\right)
\leq\left(  f-\alpha u^{\ast}\right)  ^{+}+\alpha\left(  u^{\ast}-v^{\ast
}\right)  \in P.
\]
It follows that $\left(  f-\alpha v^{\ast}\right)  ^{+}\in P$. Exchanging $u$
and $v$, we can affirm that%
\[
\left(  f-\alpha u^{\ast}\right)  ^{+}\in P\quad\text{if and only if\quad
}\left(  f-\alpha v^{\ast}\right)  ^{+}\in P,
\]
which gives the desired equality.
\end{proof}

\section{Multi-truncations and a spectrum}

Recall that the symbol $\mathcal{P}$ is used to denote the set of all proper
prime ideals of the vector lattice $L$. For any non-empty subset $\mathcal{Q}$
of $\mathcal{P}$ and any element $f$ of $L$, we denote by $\left[
\mathcal{Q}\right]  _{f}$ the set of all ideals in $\mathcal{Q}$ which omit
$f$, i.e.,%
\[
\left[  \mathcal{Q}\right]  _{f}=\left\{  P\in\mathcal{Q}:f\notin P\right\}
.
\]
The family of the sets $\left[  \mathcal{Q}\right]  _{f}$, where $f$ runs
through the positive cone $L^{+}$ of $L$, forms a base of the so-called
hull-kernel topology on $\mathcal{Q}$. Evidently, the hull-kernel topology on
$\mathcal{Q}$ coincides with the topology induced from the hull-kernel
topology on $\mathcal{P}$ (for a detailed study of the hull-kernel topology on
prime ideals of a vector lattice, the reader is encouraged to consult
\cite[Section 36]{LZ}).

\begin{quote}
\textsl{From now on, whenever a non-empty subset of }$\mathcal{P}$\textsl{\ is
considered, it is systematically equipped with its hull-kernel topology as we
just explained.}
\end{quote}

\noindent Now we go straight into the second fundamental definition of the paper.

\begin{definition}
\emph{We call a }multi-truncation \emph{on }$L$\emph{\ any family
}$\mathcal{T}$\emph{\ of truncations on }$L$\emph{\ such that}%
\[
f^{\ast}\wedge f^{\rtimes}=0\quad\text{\emph{for}}\emph{\ }\text{\emph{all}%
}\emph{\ }f\in L^{+}\emph{\ }\text{\emph{and}}\ \ast,\rtimes\in\mathcal{T}%
\text{ \emph{with}}\emph{\ }\ast\neq\rtimes.
\]

\end{definition}

It is a easy task to check that a family $\mathcal{T}$ of truncations on $L$
is a multi-truncation on $L$ if and only if%
\[
f^{\ast}\wedge g^{\rtimes}=0\quad\text{for all }f,g\in L^{+}\text{ and }%
\ast,\rtimes\in\mathcal{T}\text{ with }\ast\neq\rtimes.
\]
This necessary and sufficient condition will be used later without further mention.

The multi-truncation $\mathcal{T}$ is said to be \textsl{maximal} if it is not
strictly contained in another multi-truncation on $L$. The following
characterizes this special class of multi-truncations in the Archimedean case.
Recall first that the vector lattice $L$ is said to be \emph{Archimedean} if,
given $f,g\in L^{+}$, $nf\leq g$ for all $n\in\left\{  1,2,...\right\}  $,
then $f=0$. Obviously, $L$ is Archimedean if and only if%
\[
\inf\left\{  \frac{1}{n}f:n=1,2,...\right\}  =0\text{ for all }f\in L^{+}.
\]

\begin{lemma}
\label{maximal}If $L$ is Archimedean then a multi-truncation $\mathcal{T}$ is
maximal if and only if, for every $f\in L$ with $f>0$, there exists $\ast
\in\mathcal{T}$ such that $f^{\ast}>0$.
\end{lemma}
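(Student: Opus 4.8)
The statement is a biconditional, so I would prove the two implications separately.

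For the easy direction, suppose the stated property fails: there is some $f>0$ in $L$ with $f^{\ast}=0$ for every $\ast\in\mathcal{T}$. I want to enlarge $\mathcal{T}$. The natural candidate is to adjoin a truncation that is ``concentrated on $f$.'' Using the band generated by $f$ (or rather the ideal structure), one builds a new truncation $\diamond$ by a formula like $g^{\diamond}=\sup\{ g\wedge nf : n=1,2,\dots\}\wedge$ (something), or more cleanly, by first passing to the principal ideal and using that $f$ is a weak unit there. The point is that $\diamond$ should have range inside the band $B_f$ generated by $f$, while every $\ast\in\mathcal{T}$ kills $f$, hence $f^{\ast}=0$ forces (by axiom $(\mathfrak{a})$ applied repeatedly, $h^{\ast}\wedge nf\le\dots$) the range of each $\ast$ to be disjoint from $B_f$. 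So $g^{\ast}\wedge g^{\diamond}=0$ for all $g$ and $\ast\ne\diamond$, and $\diamond\notin\mathcal{T}$ since $f^{\diamond}=f\ne 0$ whereas $f^{\ast}=0$. Thus $\mathcal{T}\cup\{\diamond\}$ is a strictly larger multi-truncation, contradicting maximality. Checking that $\diamond$ is genuinely a truncation---in particular axiom $(\mathfrak{b})$, which is where I expect to use that $L$ is Archimedean (so that $\inf\{\frac1n f\}=0$)---is the delicate bookkeeping here.

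For the converse, assume the property holds and suppose $\mathcal{T}\subsetneq\mathcal{T}'$ with $\mathcal{T}'$ a multi-truncation; pick $\diamond\in\mathcal{T}'\setminus\mathcal{T}$. Since $\diamond$ is a nonzero truncation there is $f\in L^{+}$ with $f^{\diamond}>0$. In particular $f>0$, so by hypothesis there is $\ast\in\mathcal{T}\subseteq\mathcal{T}'$ with $f^{\ast}>0$. If $\ast\ne\diamond$, the disjointness condition in $\mathcal{T}'$ gives $f^{\ast}\wedge f^{\diamond}=0$; but then I need to derive a contradiction---here I would test this against more elements, using Lemma \ref{Birkhoff} and axiom $(\mathfrak{a}')$ to show that $f^{\ast}$ and $f^{\diamond}$ cannot both be nonzero while being disjoint, because each is $\le f$ and truncations are ``large enough'' near $f$ (e.g. $f^{\ast}\wedge f^{\diamond}\ge f\wedge (f^{\diamond})^{\ast}\cdots$, or one compares $(f^{\ast})^{\diamond}$). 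If instead the only such $\ast$ with $f^{\ast}>0$ is $\ast=\diamond$ itself, then $\diamond$ would have to already coincide with a member of $\mathcal{T}$ on its support, and a short argument (two truncations agreeing on a sufficiently large positive part agree everywhere) puts $\diamond\in\mathcal{T}$, a contradiction.

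The main obstacle I anticipate is the construction of the extra truncation $\diamond$ in the easy direction: one must produce a bona fide truncation supported ``disjointly'' from all of $\mathcal{T}$, and verifying axioms $(\mathfrak{a})$ and especially $(\mathfrak{b})$ for it is where the Archimedean hypothesis is essential and where the real work lies. The converse is mostly a disjointness-versus-domination argument driven by Lemma \ref{Birkhoff} and the equivalent form $(\mathfrak{a}')$ of axiom $(\mathfrak{a})$.
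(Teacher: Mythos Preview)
Your forward direction (maximal $\Rightarrow$ property) has the right shape but is far more elaborate than necessary. The paper simply defines $g^{\rtimes}=f\wedge g$. Axiom $(\mathfrak{a})$ is immediate, and $(\mathfrak{b})$ reduces to $ng\leq f$ for all $n$, which forces $g=0$ by the Archimedean hypothesis. Disjointness from every $\ast\in\mathcal{T}$ is the one-line computation $g^{\ast}\wedge g^{\rtimes}=g^{\ast}\wedge f\wedge g=f^{\ast}\wedge g=0$, using $(\mathfrak{a}')$ and $f^{\ast}=0$. No bands, suprema, or principal ideals are needed; your more involved construction would presumably work but obscures what is a two-line verification.

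The genuine gap is in your converse. You pick $f$ with $f^{\diamond}>0$, then invoke the hypothesis on $f$ to get $\ast\in\mathcal{T}$ with $f^{\ast}>0$, and then try to contradict $f^{\ast}\wedge f^{\diamond}=0$. But there is no contradiction here: two disjoint strictly positive elements below $f$ is perfectly possible (think of $L=\mathbb{R}^{2}$, $f=(1,1)$, and truncations acting coordinatewise). Your proposed fixes---``test against more elements,'' compare $(f^{\ast})^{\diamond}$, or argue that $\diamond$ agrees with some member of $\mathcal{T}$ on its support---do not lead anywhere, and the case ``the only such $\ast$ is $\diamond$ itself'' is incoherent since $\diamond\notin\mathcal{T}$.

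The correct move is to apply the hypothesis not to $f$ but to $f^{\diamond}$. For any $f\in L^{+}$ and any $\ast\in\mathcal{T}$ one has, using $(\mathfrak{a}')$ and the disjointness in $\mathcal{T}\cup\{\diamond\}$,
\[
(f^{\diamond})^{\ast}=(f^{\diamond})^{\ast}\wedge f=f^{\diamond}\wedge f^{\ast}=0.
\]
Thus $(f^{\diamond})^{\ast}=0$ for every $\ast\in\mathcal{T}$, and the hypothesis forces $f^{\diamond}=0$. Since $f\in L^{+}$ was arbitrary, $\diamond$ is the zero map, contradicting the definition of a truncation.
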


\begin{proof}
Assume that $\mathcal{T}$ is maximal and let $f\in L^{+}$ such that $f^{\ast
}=0$ for all $\ast\in\mathcal{T}$. We must prove that $f=0$. Arguing by
contradiction, we suppose that $f>0$. The formula%
\[
g^{\rtimes}=f\wedge g\text{ for all }g\in L
\]
defines a truncation $\rtimes$ on $L$. Indeed, the condition $\left(
\mathfrak{a}\right)  $ being clear, we prove the condition $\left(
\mathfrak{b}\right)  $. Let $g\in L^{+}$ such that $\left(  ng\right)
^{\rtimes}=ng$ for all $n\in\left\{  1,2,...\right\}  $. Thus, $0\leq ng\leq
f$ for all $n\in\left\{  1,2,...\right\}  $ from which we derive that $g=0$,
so $\rtimes$ is a truncation on $L$. Now, if $\ast\in\mathcal{T}$ and $g\in
L^{+}$ then%
\[
g^{\ast}\wedge g^{\rtimes}=g^{\ast}\wedge f\wedge g=f^{\ast}\wedge g=0.
\]
It follows that $\mathcal{T}\cup\left\{  \rtimes\right\}  $ is a
multi-truncation on $L$. By maximality, we derive that $\rtimes\in\mathcal{T}$
and so $f^{\rtimes}=0$. This yields that $0=f^{\rtimes}=f\wedge f=f$, a contradiction.

Conversely, assume that if $f\in L^{+}$ with $f^{\ast}=0$ for all $\ast
\in\mathcal{T}$, then $f=0$. We claim that $\mathcal{T}$ is maximal.
Otherwise, there exists a truncation $\rtimes$ on $L$ such that $\rtimes
\notin\mathcal{T}$ and $\mathcal{T}\cup\left\{  \rtimes\right\}  $ is a
multi-truncation on $L$. This shows that if $\ast\in\mathcal{T}$ and $f\in
L^{+}$ then%
\[
0\leq\left(  f^{\rtimes}\right)  ^{\ast}=\left(  f^{\rtimes}\right)  ^{\ast
}\wedge f=f^{\rtimes}\wedge f^{\ast}=0.
\]
Hence,%
\[
\left(  f^{\rtimes}\right)  ^{\ast}=0\text{ for all }\ast\in\mathcal{T},
\]
from which it follows that $f^{\rtimes}=0$. We derive that $\rtimes$ vanishes
on the whole $L^{+}$, contradicting the definition itself of a truncation.
\end{proof}

It should be noted in passing that a standard argument based on Zorn's Lemma
yields that any truncation on $L$ is contained in a maximal multi-truncation
on $L$.

\begin{quote}
\textsl{In the rest of the section, we fix a multi-truncation }$\mathcal{T}$
\textsl{on the vector lattice }$L$ \textsl{and a non-empty subset
}$\mathcal{Q}$ of $\mathcal{P}$.
\end{quote}

\noindent Define%
\[
\mathcal{Q}^{\ast}=%
%TCIMACRO{\dbigcup _{f\in L^{+}}}%
%BeginExpansion
{\displaystyle\bigcup_{f\in L^{+}}}
%EndExpansion
\left[  \mathcal{Q}\right]  _{f^{\ast}}\text{ for all }\ast\in\mathcal{T\quad
}\text{and}\mathcal{\quad Q}_{\mathcal{T}}=%
%TCIMACRO{\dbigcup _{\ast\in\mathcal{T}}}%
%BeginExpansion
{\displaystyle\bigcup_{\ast\in\mathcal{T}}}
%EndExpansion
\mathcal{Q}^{\ast}.
\]
An alternative base of the hull-kernel topology on $\mathcal{Q}_{\mathcal{T}}$
is provided in what follows.

\begin{lemma}
\label{Partition}

\begin{enumerate}
\item[\emph{(i)}] $\left\{  \mathcal{Q}^{\ast}:\ast\in\mathcal{T}\right\}  $
is a partition of $\mathcal{Q}_{\mathcal{T}}$ into open-closed sets.

\item[\emph{(ii)}] The sets $\left[  \mathcal{Q}\right]  _{f^{\ast}}$, where
$\ast$ ranges over $\mathcal{T}$ and $f$ ranges over $L^{+}$, form a base for
$\mathcal{Q}_{\mathcal{T}}$.
\end{enumerate}
\end{lemma}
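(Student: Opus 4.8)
For part (i) the plan is to verify two set-theoretic facts and then invoke elementary topology. First, $\{\mathcal{Q}^{\ast}:\ast\in\mathcal{T}\}$ covers $\mathcal{Q}_{\mathcal{T}}$ by the very definition $\mathcal{Q}_{\mathcal{T}}=\bigcup_{\ast\in\mathcal{T}}\mathcal{Q}^{\ast}$. Next I would check pairwise disjointness: if $\ast\neq\rtimes$ and $P\in\mathcal{Q}^{\ast}\cap\mathcal{Q}^{\rtimes}$, choose $f,g\in L^{+}$ with $f^{\ast}\notin P$ and $g^{\rtimes}\notin P$; since $\mathcal{T}$ is a multi-truncation, $f^{\ast}\wedge g^{\rtimes}=0\in P$, which contradicts primeness of $P$. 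For openness I would use that $[\mathcal{Q}]_{f^{\ast}}\subseteq\mathcal{Q}^{\ast}\subseteq\mathcal{Q}_{\mathcal{T}}$, so each $[\mathcal{Q}]_{f^{\ast}}$ is a basic open subset of $\mathcal{Q}$ contained in $\mathcal{Q}_{\mathcal{T}}$, hence open in the subspace $\mathcal{Q}_{\mathcal{T}}$; consequently each $\mathcal{Q}^{\ast}=\bigcup_{f\in L^{+}}[\mathcal{Q}]_{f^{\ast}}$ is open. Finally, since the $\mathcal{Q}^{\ast}$ are disjoint and cover $\mathcal{Q}_{\mathcal{T}}$, the complement in $\mathcal{Q}_{\mathcal{T}}$ of a given $\mathcal{Q}^{\ast}$ is the union of the remaining (open) pieces, so $\mathcal{Q}^{\ast}$ is closed as well; thus $\{\mathcal{Q}^{\ast}:\ast\in\mathcal{T}\}$ is the desired partition (discarding empty members, if any).

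For part (ii) the sets $[\mathcal{Q}]_{f^{\ast}}$ are already open in $\mathcal{Q}_{\mathcal{T}}$ by the previous paragraph, so it remains to show they refine the base of $\mathcal{Q}_{\mathcal{T}}$ inherited from $\mathcal{P}$. Given $P\in\mathcal{Q}_{\mathcal{T}}$ and $h\in L^{+}$ with $h\notin P$, I claim that $[\mathcal{Q}]_{h^{\ast}}$ works for a suitable $\ast\in\mathcal{T}$, namely $P\in[\mathcal{Q}]_{h^{\ast}}\subseteq\{Q\in\mathcal{Q}_{\mathcal{T}}:h\notin Q\}$. The crucial point is that $h^{\ast}\notin P$ for some $\ast\in\mathcal{T}$: since $P\in\mathcal{Q}_{\mathcal{T}}$ there are $\ast\in\mathcal{T}$ and $f\in L^{+}$ with $f^{\ast}\notin P$, and $\left(\mathfrak{a}^{\prime}\right)$ gives $f\wedge h^{\ast}=f^{\ast}\wedge h$; if $h^{\ast}\in P$ this element lies in $P$, forcing $f^{\ast}\in P$ or $h\in P$ by primeness, which is impossible. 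Granting $h^{\ast}\notin P$ we get $P\in[\mathcal{Q}]_{h^{\ast}}$; moreover $h^{\ast}\leq h$ forces, for any ideal $Q$, that $h^{\ast}\notin Q$ implies $h\notin Q$, while $h^{\ast}\notin Q$ also places $Q$ in $\mathcal{Q}^{\ast}\subseteq\mathcal{Q}_{\mathcal{T}}$. Hence $[\mathcal{Q}]_{h^{\ast}}$ lies between $P$ and the given basic neighbourhood, which is precisely what the base condition requires.

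The only step that is not pure bookkeeping is the short argument in (ii) showing $h^{\ast}\notin P$, where $\left(\mathfrak{a}^{\prime}\right)$ is combined with primeness of $P$; I expect this to be the one place needing an idea, and it is exactly what makes defining $\mathcal{Q}_{\mathcal{T}}$ as the union of the blocks $\mathcal{Q}^{\ast}$ the correct move. Note that neither the Archimedean hypothesis nor maximality of $\mathcal{T}$ enters anywhere in this argument.
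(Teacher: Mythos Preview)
Your proof is correct and follows essentially the same approach as the paper's. The only cosmetic difference is in part~(ii): where the paper introduces an auxiliary $g\in L^{+}$ with $g^{\ast}\notin P$ and then uses the base element $\left(f\wedge g\right)^{\ast}$, you argue directly that $h^{\ast}\notin P$ via $\left(\mathfrak{a}^{\prime}\right)$ and primeness and use $h^{\ast}$ itself as the base element---a slight streamlining of the same idea.
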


\begin{proof}
$\mathrm{(i)}$ It only takes a moment's thought to see that for each
$P\in\mathcal{Q}_{\mathcal{T}}$ there is exactly one truncation $\ast$ in
$\mathcal{T}$ such that $P\in\mathcal{Q}^{\ast}$. This means that the family
$\left\{  \mathcal{Q}^{\ast}:\ast\in\mathcal{T}\right\}  $ is, indeed, a
partition of $\mathcal{Q}_{\mathcal{T}}$. Moreover, if $\ast\in\mathcal{T}$
and $f\in L^{+}$ then%
\[
\left[  \mathcal{Q}_{\mathcal{T}}\right]  _{f^{\ast}}=\left[  \mathcal{Q}%
^{\ast}\right]  _{f^{\ast}}=\left[  \mathcal{Q}\right]  _{f^{\ast}}.
\]
It follows that $\left[  \mathcal{Q}\right]  _{f^{\ast}}$ is an open set in
$\mathcal{Q}_{\mathcal{T}}$ and so is $\mathcal{Q}^{\ast}$. We infer in
particular that the union $\underset{\rtimes\in\mathcal{T}\backslash\left\{
\ast\right\}  }{%
%TCIMACRO{\dbigcup }%
%BeginExpansion
{\displaystyle\bigcup}
%EndExpansion
}\mathcal{Q}^{\rtimes}$ is again an open set in $\mathcal{Q}_{\mathcal{T}}$.
Thus, $\mathcal{Q}^{\ast}$ is a closed set in $\mathcal{Q}_{\mathcal{T}}$ and
$\mathrm{(i)}$ follows.

$\mathrm{(ii)}$ Choose $f\in L^{+}$ and $P\in\left[  \mathcal{Q}_{\mathcal{T}%
}\right]  _{f}$. So, there exists $\ast\in\mathcal{T}$ such that $P\in\left[
\mathcal{Q}^{\ast}\right]  _{f}$. In particular, $P\in\left[  \mathcal{Q}%
\right]  _{g^{\ast}}$ for some $g\in L^{+}$. Since $P$ is prime, we get%
\[
\left(  f\wedge g\right)  ^{\ast}=f^{\ast}\wedge g^{\ast}=f\wedge g^{\ast
}\notin P.
\]
Putting $h=f\wedge g$, we derive that%
\[
P\in\left[  \mathcal{Q}\right]  _{h^{\ast}}=\left[  \mathcal{Q}^{\ast}\right]
_{h^{\ast}}=\left[  \mathcal{Q}_{\mathcal{T}}\right]  _{h^{\ast}}.
\]
Moreover, if $Q\in\left[  \mathcal{Q}_{\mathcal{T}}\right]  _{h^{\ast}}$ then
$f\notin Q$ because $h^{\ast}\leq f$. Consequently, $Q\in\left[
\mathcal{Q}_{\mathcal{T}}\right]  _{f}$ and thus%
\[
P\in\left[  \mathcal{Q}_{\mathcal{T}}\right]  _{h^{\ast}}\subset\left[
\mathcal{Q}_{\mathcal{T}}\right]  _{f}.
\]
This leads to the assertion $\mathrm{(ii)}$ and completes the proof of the lemma.
\end{proof}

We are going to apply the previous results in a crucial particular case. It is
well known that for every $f\in L$ with $f\neq0$, there exists $P\in
\mathcal{P}$ which is maximal with respect to not containing $f$ (for the
proof, see \cite[Proposition 10.1]{D} or \cite[Theorem 33.5 ]{LZ}). Following
the terminology of \cite{BKW,D}, such an ideal $P$ is referred to as a
\textsl{value} of $f$. The (non-empty) set of all values of $f$ is denoted by
$\mathrm{Val}\left(  f\right)  $. To proceed our study, we need the following
fundamental lemma.

\begin{lemma}
\label{Value}Let $\ast$ be truncation on $L$ and $f,g\in L^{+}$. Then%
\[
\mathrm{Val}\left(  f^{\ast}\right)  \cap\left[  \mathcal{P}\right]
_{g^{\ast}}=\mathrm{Val}\left(  g^{\ast}\right)  \cap\left[  \mathcal{P}%
\right]  _{f^{\ast}}.
\]

\end{lemma}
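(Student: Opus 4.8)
Looking at Lemma~\ref{Value}, the claim is a symmetry statement: $\mathrm{Val}(f^{\ast})\cap[\mathcal{P}]_{g^{\ast}}=\mathrm{Val}(g^{\ast})\cap[\mathcal{P}]_{f^{\ast}}$. By symmetry it suffices to prove one inclusion, say $\subseteq$, and then swap the roles of $f$ and $g$.

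The plan is as follows. Fix $P\in\mathrm{Val}(f^{\ast})\cap[\mathcal{P}]_{g^{\ast}}$. Then $P$ is a value of $f^{\ast}$ and $g^{\ast}\notin P$; I must show $P$ is a value of $g^{\ast}$ and $f^{\ast}\notin P$. The first thing to establish is that $f^{\ast}\notin P$: since $P\in\mathrm{Val}(f^{\ast})$, by definition $f^{\ast}\notin P$, so that half is immediate. The real content is that $P$ is maximal with respect to not containing $g^{\ast}$. So suppose $Q$ is a prime ideal with $P\subsetneq Q$; I need $g^{\ast}\in Q$. Since $P$ is maximal among primes not containing $f^{\ast}$, and $P\subsetneq Q$, we get $f^{\ast}\in Q$. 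The key step is then to deduce $g^{\ast}\in Q$ from $f^{\ast}\in Q$, $g^{\ast}\notin P$, and $P\subset Q$.

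To get $g^{\ast}\in Q$ from $f^{\ast}\in Q$, the idea is to use Lemma~\ref{Ball}. Note that $g^{\ast}\notin P$ forces $g>0$, and in particular one should look at whether $g-g^{\ast}\in P$ or not. If $g-g^{\ast}\notin P$, then Lemma~\ref{Ball}(i) (with the roles: the hypothesis $g-g^{\ast}\notin P$, applied to $f$) gives $f^{\ast}-f\wedge g^{\ast}\in P\subset Q$; combined with $f^{\ast}\in Q$ this yields $f\wedge g^{\ast}\in Q$, hence (since $Q$ is an ideal and $f\wedge g^{\ast}\le$ well, we need to bound $g^{\ast}$) — here one uses $f^{\ast}\wedge g = f^{\ast}\wedge g^{\ast}$ from condition $(\mathfrak{a}')$, so actually $f\wedge g^{\ast}=f^{\ast}\wedge g$ is not quite $g^{\ast}$. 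Instead, the cleaner route: from $g-g^{\ast}\notin P$ apply Lemma~\ref{Ball}(i) \emph{again} swapping, to get the symmetric containment, and chain as in the proof of Lemma~\ref{Ball}(ii) to conclude $f^{\ast}-g^{\ast}\in P\subset Q$; then $f^{\ast}\in Q$ gives $g^{\ast}\in Q$. In the remaining case $g-g^{\ast}\in P$, we have $g\wedge(\text{something})$... but $g^{\ast}\le g$ and $g-g^{\ast}\in P$ does not directly put $g^{\ast}$ anywhere; however $g^{\ast}=g-(g-g^{\ast})$ and $g^{\ast}\notin P$ forces $g\notin P$, so in $Q$ we may use that $g-g^{\ast}\in Q$ and try to also land $g$ or $g^{\ast}$ in $Q$ via $f^{\ast}\in Q$. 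The main obstacle is precisely this case analysis: marshalling Lemmas~\ref{Birkhoff} and~\ref{Ball} and the identity $(\mathfrak{a}')$ to show $f^{\ast}\in Q\Rightarrow g^{\ast}\in Q$ whenever $g^{\ast}\notin P$ and $P\subset Q$.

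I expect the slick formulation to be: show that for any prime $Q\supset P$, the conditions $g^{\ast}\notin P$ and $f^{\ast}\notin P$ together imply that $f^{\ast}\in Q\iff g^{\ast}\in Q$. This symmetric biconditional, once proved, immediately gives both inclusions of the lemma at once: $P\in\mathrm{Val}(f^{\ast})\cap[\mathcal{P}]_{g^{\ast}}$ means $f^{\ast},g^{\ast}\notin P$ and every prime $Q\supsetneq P$ contains $f^{\ast}$, hence contains $g^{\ast}$, so $P\in\mathrm{Val}(g^{\ast})$; and $f^{\ast}\notin P$ gives $P\in[\mathcal{P}]_{f^{\ast}}$. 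To prove the biconditional, I would first dispose of the easy observation that $g^{\ast}\notin P$ implies $g^{\ast}\wedge f^{\ast}=f^{\ast}\wedge g\notin P$ is false in general, so instead I would run the argument through Lemma~\ref{Ball}(i)--(ii) as sketched, splitting on whether $g-g^{\ast}\in P$; the hard part will be the case $g-g^{\ast}\in P$, where one notes $g-g^{\ast}\in P\subset Q$ and, using that $g^{\ast}\notin P$ (so $g\notin P$ and $f$ must dominate $g^{\ast}$ appropriately via $(\mathfrak{a}')$), derives a contradiction or the desired membership by applying Lemma~\ref{Ball}(i) to a suitable pair — likely replacing $g$ by $2g$ as in the proof of Lemma~\ref{BH}(i), where exactly this kind of ``pass to a subprime then double'' trick appears.
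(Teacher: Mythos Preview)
Your overall architecture is right: reduce to one inclusion by symmetry, take $P\in\mathrm{Val}(f^{\ast})$ with $g^{\ast}\notin P$, pick a prime $Q\supsetneq P$, note $f^{\ast}\in Q$ by maximality of $P$, and aim to show $g^{\ast}\in Q$. But the execution has a real gap. Your ``easy case'' $g-g^{\ast}\notin P$ does \emph{not} suffice to run the argument of Lemma~\ref{Ball}(ii): that lemma needs \emph{both} $(f-f^{\ast})^{\ast}\notin P$ and $(g-g^{\ast})^{\ast}\notin P$, and from $g-g^{\ast}\notin P$ alone Lemma~\ref{Ball}(i) only gives $f^{\ast}-f\wedge g^{\ast}\in P$, not the symmetric containment $g^{\ast}-g\wedge f^{\ast}\in P$. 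So even before you reach your ``hard case'' $g-g^{\ast}\in P$, the argument is already incomplete; you would need a four-way split on whether each of $f-f^{\ast}$, $g-g^{\ast}$ lies in $P$, and you have not handled three of the four cases.

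The missing idea is to invoke Lemma~\ref{BH}(i) as a \emph{statement}, not merely to borrow its doubling trick. Since $f^{\ast}\notin P$ and $g^{\ast}\notin P$, Lemma~\ref{BH}(i) supplies $m,n\in\{1,2,\dots\}$ with $mf,ng\in\pi^{\ast}(P)$, i.e.\ $(mf-(mf)^{\ast})^{\ast}\notin P$ and $(ng-(ng)^{\ast})^{\ast}\notin P$. Now Lemma~\ref{Ball}(ii) applies directly to the pair $(mf,ng)$ and gives $(mf)^{\ast}-(ng)^{\ast}\in P\subset Q$. Finally Lemma~\ref{Ball}(iii) says $f^{\ast}\in Q\Leftrightarrow(mf)^{\ast}\in Q$ and $g^{\ast}\in Q\Leftrightarrow(ng)^{\ast}\in Q$, so $f^{\ast}\in Q$ forces $g^{\ast}\in Q$, contradicting $Q\in\mathrm{Val}(g^{\ast})$. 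This is exactly the paper's route, and it bypasses the case analysis entirely: passing to suitable multiples lands you automatically in the one case where Lemma~\ref{Ball}(ii) fires.
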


\begin{proof}
Let $P\in\mathrm{Val}\left(  f^{\ast}\right)  $ and assume that $g^{\ast
}\notin P$. We claim that $P\in\mathrm{Val}\left(  g^{\ast}\right)  $. To this
end, we shall argue by contradiction supposing that $P\notin\mathrm{Val}%
\left(  g^{\ast}\right)  $. By \cite[Theorem 33.5]{LZ}, we can find
$Q\in\mathrm{Val}\left(  g^{\ast}\right)  $ such that $P\subset Q$. Because
$P\neq Q$ and $P\in\mathrm{Val}\left(  f^{\ast}\right)  $, we must have
$f^{\ast}\in Q$. Now, from Lemma \ref{BH} $\mathrm{(i)}$ it follows that%
\[
\left(  mf-\left(  mf\right)  ^{\ast}\right)  ^{\ast}\notin P\quad
\text{and\quad}\left(  ng-\left(  ng^{\ast}\right)  \right)  ^{\ast}\notin P
\]
hold for some $m,n\in\left\{  1,2,...\right\}  $. Using Lemma \ref{Ball}
$\mathrm{(ii)}$, we obtain%
\[
\left(  mf\right)  ^{\ast}-\left(  ng\right)  ^{\ast}\in P\subset Q.
\]
Furthermore, $\left(  mf\right)  ^{\ast}\in Q$ because $f^{\ast}\in Q$ (see
Lemma \ref{Ball} $\mathrm{(iii)}$). We may conclude that $\left(  ng\right)
^{\ast}\in Q$. But then $g^{\ast}\in Q$ which contradicts the condition
$Q\in\mathrm{Val}\left(  g^{\ast}\right)  $ and shows that $P\in
\mathrm{Val}\left(  g^{\ast}\right)  $. We infer that%
\[
\mathrm{Val}\left(  f^{\ast}\right)  \cap\left[  \mathcal{P}\right]
_{g^{\ast}}\subset\mathrm{Val}\left(  g^{\ast}\right)  \cap\left[
\mathcal{P}\right]  _{f^{\ast}}.
\]
We just swap $f$ and $g$ to get the desired equality.
\end{proof}

At this point, we set%
\[
\mathcal{V}=%
%TCIMACRO{\dbigcup _{\ast\in\mathcal{T}}}%
%BeginExpansion
{\displaystyle\bigcup_{\ast\in\mathcal{T}}}
%EndExpansion%
%TCIMACRO{\dbigcup _{f\in L^{+}}}%
%BeginExpansion
{\displaystyle\bigcup_{f\in L^{+}}}
%EndExpansion
\mathrm{Val}\left(  f^{\ast}\right)  .
\]
Observe that from Lemma \ref{Value} it follows quite easily that%
\[
\left[  \mathcal{V}\right]  _{f^{\ast}}=\mathrm{Val}\left(  f^{\ast}\right)
\text{ for all }f\in L^{+}.
\]
Accordingly,%
\[
\mathcal{V}^{\ast}=%
%TCIMACRO{\dbigcup _{f\in L^{+}}}%
%BeginExpansion
{\displaystyle\bigcup_{f\in L^{+}}}
%EndExpansion
\mathrm{Val}\left(  f^{\ast}\right)  \text{ for all }\ast\in\mathcal{T\quad
}\text{and\quad}\mathcal{V}=\mathcal{V}_{\mathcal{T}}.
\]
It turns out that the particular space $\mathcal{V}$, which we call the
\textsl{spectrum} of $L$ \textsl{with respect to the multi-truncation
}$\mathcal{T}$, enjoys remarkable topological properties.

\begin{lemma}
\label{Hausdorff}

\begin{enumerate}
\item[\emph{(i)}] $\mathcal{V}$ is a locally compact Hausdorff space.

\item[\emph{(ii)}] If $L$ is Archimedean and $\mathcal{T}$ is maximal then
$\mathcal{V}$ is dense in $\mathcal{P}$.
\end{enumerate}
\end{lemma}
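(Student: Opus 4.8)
The plan is to prove the two assertions separately, using Lemmas \ref{Partition} and \ref{Value} together with the already-recorded fact that $[\mathcal{V}]_{f^{\ast}}=\mathrm{Val}(f^{\ast})$ for every $f\in L^{+}$, so that the basic open sets of $\mathcal{V}$ are precisely the sets $\mathrm{Val}(f^{\ast})$. For (i) it then suffices to prove that $\mathcal{V}$ is Hausdorff and that each $\mathrm{Val}(f^{\ast})$ is compact: since every $P\in\mathcal{V}$ lies in some $\mathrm{Val}(f^{\ast})$ by the very definition of $\mathcal{V}$, this produces a compact open neighbourhood around each point. To see that $\mathrm{Val}(f^{\ast})$ is compact I would argue by contradiction on a basic open cover. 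Because the pieces $\mathcal{V}^{\rtimes}$ are pairwise disjoint (Lemma \ref{Partition}(i)), only the basic sets carrying the truncation $\ast$ of the piece $\mathcal{V}^{\ast}\supseteq\mathrm{Val}(f^{\ast})$ can meet $\mathrm{Val}(f^{\ast})$, so one reduces to a cover $\{[\mathcal{V}]_{g_{i}^{\ast}}\}_{i\in I}=\{\mathrm{Val}(g_{i}^{\ast})\}_{i\in I}$. The key is an easy consequence of Lemma \ref{Value}: if $P\in\mathrm{Val}(f^{\ast})$ then $P\notin\mathrm{Val}(g^{\ast})$ exactly when $g^{\ast}\in P$. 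Hence, if no finite subfamily covered $\mathrm{Val}(f^{\ast})$, then for each finite $F\subseteq I$ there would be a value of $f^{\ast}$ containing all $g_{i}^{\ast}$, $i\in F$; since every element of the ideal $J$ generated by $\{g_{i}^{\ast}:i\in I\}$ is dominated by finitely many of the $g_{i}^{\ast}$, this forces $f^{\ast}\notin J$. By \cite[Theorem 33.5]{LZ} there is a prime $P^{\ast}\supseteq J$ maximal with respect to $f^{\ast}\notin P^{\ast}$; any prime strictly above $P^{\ast}$ still contains $J$, so $P^{\ast}$ is in fact a value of $f^{\ast}$, and it contains every $g_{i}^{\ast}$, hence is covered by no member of the family --- a contradiction. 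I expect this conversion of ``no finite subcover'' into the algebraic statement $f^{\ast}\notin J$, via Lemma \ref{Value} and the remark that a prime maximal-without-$f^{\ast}$ above a given ideal is automatically a value of $f^{\ast}$, to be the main obstacle; everything else is short.

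For the Hausdorff property take $P\neq Q$ in $\mathcal{V}$. If they lie in distinct pieces $\mathcal{V}^{\ast}$ and $\mathcal{V}^{\rtimes}$, these are disjoint open(-closed) neighbourhoods by Lemma \ref{Partition}(i) and we are done. Otherwise $P,Q\in\mathcal{V}^{\ast}$, say $P\in\mathrm{Val}(f^{\ast})$ and $Q\in\mathrm{Val}(g^{\ast})$. I would first show that neither of $P,Q$ contains the other: if, say, $P\subsetneq Q$, then $g^{\ast}\notin P$ (since $g^{\ast}\notin Q\supseteq P$), so Lemma \ref{Value} yields $P\in\mathrm{Val}(g^{\ast})$, which is impossible because $Q\supsetneq P$ also omits $g^{\ast}$, contradicting the maximality defining a value. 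So we may choose $a\in P^{+}\setminus Q$ and $b\in Q^{+}\setminus P$. Setting $c=(a-b)^{+}=a-a\wedge b$ and $d=(b-a)^{+}=b-a\wedge b$, we have $c\wedge d=0$, $c\leq a$, $d\leq b$, hence $c\in P$ and $d\in Q$; moreover $c\notin Q$ (otherwise $a=c+a\wedge b\in Q$) and likewise $d\notin P$. Therefore $P\in[\mathcal{V}]_{d}$, $Q\in[\mathcal{V}]_{c}$, and $[\mathcal{V}]_{c}\cap[\mathcal{V}]_{d}=[\mathcal{V}]_{c\wedge d}=\emptyset$, which finishes (i).

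For (ii) it is enough to check that every non-empty basic open set $[\mathcal{P}]_{h}$ with $h\in L^{+}$, $h\neq 0$, meets $\mathcal{V}$. Since $\mathcal{T}$ is maximal and $h>0$, Lemma \ref{maximal} provides $\ast\in\mathcal{T}$ with $h^{\ast}>0$; picking $P\in\mathrm{Val}(h^{\ast})\subseteq\mathcal{V}$ we get $h^{\ast}\notin P$, whence $h\notin P$ because $h^{\ast}\leq h$ and $P$ is an ideal. Thus $P\in[\mathcal{P}]_{h}\cap\mathcal{V}$, so $\mathcal{V}$ is dense in $\mathcal{P}$.
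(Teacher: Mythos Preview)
Your proof is correct and follows the same overall architecture as the paper's: local compactness via compactness of the basic neighbourhoods $\mathrm{Val}(f^{\ast})$, the Hausdorff property by first separating distinct pieces $\mathcal{V}^{\ast}$ and then, within a single piece, deducing incomparability of $P$ and $Q$ and producing disjoint basic opens from disjoint elements; and density via Lemma~\ref{maximal}.

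There are three small differences worth noting. First, for compactness the paper simply invokes \cite[Theorem 36.4\,(i)]{LZ}, whereas you reprove that result by hand; your cover argument (passing to the ideal $J$ generated by the $g_i^{\ast}$ and extracting a value of $f^{\ast}$ above $J$) is exactly the standard proof of that theorem, so nothing is gained or lost except self-containment. Second, in the Hausdorff step the paper first moves $P$ and $Q$ into $\mathrm{Val}(h^{\ast})$ for $h=f+g$ to obtain incomparability, and then separates with the \emph{truncated} opens $[\mathcal{V}]_{a^{\ast}}$, $[\mathcal{V}]_{b^{\ast}}$; you instead use Lemma~\ref{Value} directly for incomparability and separate with the plain hull--kernel opens $[\mathcal{V}]_{c}$, $[\mathcal{V}]_{d}$. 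Your version is slightly cleaner since it avoids the extra check that $b^{\ast}\notin P$ (which in the paper relies implicitly on $h^{\ast}\notin P$ and primality). Third, for (ii) the paper shows $\bigcap_{P\in\mathcal{V}}P=\{0\}$ and cites \cite[Theorem 36.1]{LZ}; you argue the equivalent statement that every nonempty $[\mathcal{P}]_{h}$ meets $\mathcal{V}$. Both routes are one line once Lemma~\ref{maximal} is in hand.
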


\begin{proof}
$\mathrm{(i)}$ Let $P\in\mathcal{V}$ and $\ast\in\mathcal{T}$ such that
$P\in\mathcal{V}^{\ast}$. Hence, $P\in\mathrm{Val}\left(  f^{\ast}\right)  $
for some $f\in L^{+}$. We call \cite[Theorem 36.4 (i)]{LZ} to affirm that
$\mathrm{Val}\left(  f^{\ast}\right)  =\left[  \mathcal{V}\right]  _{f^{\ast}%
}$ is a compact neighborhood of $P$. This shows that $\mathcal{V}$ is locally
compact, as desired. Now, we claim that $\mathcal{V}$ is Hausdorff. To this
end, pick $P,Q\in\mathcal{V}$ with $P\neq Q$. Hence, there exist $\ast
,\rtimes\in\mathcal{T}$ and $f,g\in L^{+}$ such that%
\[
P\in\left[  \mathcal{V}\right]  _{f^{\ast}}=\mathrm{Val}\left(  f^{\ast
}\right)  \quad\text{and\quad}Q\in\left[  \mathcal{V}\right]  _{g^{\rtimes}%
}=\mathrm{Val}\left(  g^{\rtimes}\right)  .
\]
If $\ast\neq\rtimes$ then the open sets $\left[  \mathcal{V}\right]
_{f^{\ast}}$ and $\left[  \mathcal{V}\right]  _{g^{\rtimes}}$ are disjoint.
Now, suppose that $\ast=\rtimes$ and let $h=f+g$. Observe that $f^{\ast}\leq
h^{\ast}$ and so $h^{\ast}\notin P$. Analogously, we have $h^{\ast}\notin Q$.
In other words, $P,Q\in\left[  \mathcal{V}\right]  _{h^{\ast}}$ and so
$P,Q\in\mathrm{Val}\left(  h^{\ast}\right)  $ (where we use Lemma
\ref{Value}). Then, by maximality, neither $P\subset Q$ nor $Q\subset P$.
Therefore, there exists positive elements $u\in P$ and $v\in Q$ such that
$0<v\notin P$ and $0<u\notin Q$. Put%
\[
a=u-\left(  u\wedge v\right)  \quad\text{and\quad}b=v-\left(  u\wedge
v\right)  .
\]
Observe that $a,b>0$ and $a\wedge b=0$, so $\mathrm{Val}\left(  a^{\ast
}\right)  \cap\mathrm{Val}\left(  b^{\ast}\right)  =\emptyset$ (by primality).
Moreover, it is not hard to see that $b^{\ast}\notin P$ (because $b\notin P$)
and thus, again by Lemma \ref{Value}, we obtain $P\in\mathrm{Val}\left(
b^{\ast}\right)  =\left[  \mathcal{V}\right]  _{b^{\ast}}$. Similarly,
$Q\in\mathrm{Val}\left(  a^{\ast}\right)  =\left[  \mathcal{V}\right]
_{a^{\ast}}$, which shows that $\mathcal{V}$ is Hausdorff, as required.

$\mathrm{(ii)}$ Let $f\in L^{+}$ such that $f\in P$ for all $P\in\mathcal{V}$.
Arguing by contradiction, we assume that $0<f$. As $\mathcal{T}$ is maximal,
Lemma \ref{maximal} yields that $0<f^{\ast}$ for some $\ast\in\mathcal{T}$.
Therefore, we can find $P\in\mathrm{Val}\left(  f^{\ast}\right)
\subset\mathcal{V}$. But then $f\notin P$, which is an obvious contradiction.
We derive that $\underset{P\in\mathcal{V}}{%
%TCIMACRO{\dbigcap }%
%BeginExpansion
{\displaystyle\bigcap}
%EndExpansion
}P=\left\{  0\right\}  $ and density follows from \cite[Theorem 36.1]{LZ}.
\end{proof}

\section{Multi-truncations and extended-real valued functions}

We shall keep the same notations of the previous section. On the other hand,
we denote by $\overline{\mathbb{R}}$ the two-point compactification of the
real line $\mathbb{R}$.

\begin{quote}
\textsl{As in the previous section, $\mathcal{T}$ is a multi-truncation on
}$L$ \textsl{and $\mathcal{Q}$ is a non-empty subset}\textit{\ }$\mathcal{Q}$
\textsl{of}\textit{\ }$\mathcal{P}$.
\end{quote}

\noindent According to the first assertion in Lemma \ref{Partition}, any
function on $\mathcal{Q}_{\mathcal{T}}$ can be defined by its respective
restrictions to the sets $\mathcal{Q}^{\ast}$, where $\ast$ runs through
$\mathcal{T}$. In so doing, for any $f\in L$, we define an extended-real
valued function $\widehat{f}:\mathcal{Q}_{\mathcal{T}}\rightarrow
\overline{\mathbb{R}}$ as follows. For every $P\in\mathcal{Q}_{\mathcal{T}}$,
there exists a unique $\ast\in\mathcal{T}$ such that $P\in\mathcal{Q}^{\ast}$.
Choose then an arbitrary element $u$ in the set%
\[
\pi^{\ast}\left(  P\right)  =\left\{  v\in L^{+}:\left(  v-v^{\ast}\right)
^{\ast}\notin P\right\}
\]
and put%
\[
\widehat{f}\left(  P\right)  =\inf\left\{  \alpha\in\mathbb{R}:\left(
f-\alpha u^{\ast}\right)  ^{+}\in P\right\}  \text{ for all }P\in
\mathcal{Q}^{\ast}%
\]
(with the usual agreements $\inf\mathbb{R}=-\infty$ and $\inf\emptyset=\infty
$). Contrary to what may be thought at first glance, Lemma \ref{BH}
$\mathrm{(ii)}$ guarantees us that the value of $\widehat{f}$ at the point $P$
does not depend on the choice of $u$ and depends only on $f$ and $P$.

Some of these functions have noteworthy behaviors.

\begin{lemma}
\label{One-Zero}Let $\ast\in\mathcal{T}$ and $P\in\mathcal{Q}^{\ast}$. Then

\begin{enumerate}
\item[\emph{(i)}] $\widehat{u^{\ast}}\left(  P\right)  =1$ for all $u\in
\pi^{\ast}\left(  P\right)  $,

\item[\emph{(ii)}] $\widehat{f}\left(  P\right)  =0$ for all $f\in P$, and

\item[\emph{(iii)}] $\widehat{f^{\rtimes}}\left(  P\right)  =0$ for all $f\in
L^{+}$ and $\rtimes\in\mathcal{T}\backslash\left\{  \ast\right\}  $.
\end{enumerate}
\end{lemma}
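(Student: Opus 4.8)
The plan is to reduce all three assertions to a direct inspection of the set $S_{h}=\{\alpha\in\mathbb{R}:(h-\alpha u^{\ast})^{+}\in P\}$, whose infimum is $\widehat{h}(P)$, after first isolating one auxiliary fact on which everything rests: \emph{if $u\in\pi^{\ast}(P)$ then $u^{\ast}\notin P$}. To see this, note that $u^{\ast}\le u$ (axiom $(\mathfrak{a})$) forces $u-u^{\ast}\le u$, while $\ast$ is monotone on $L^{+}$ --- indeed, if $a\le b$ in $L^{+}$ then $a^{\ast}=(a\wedge b)^{\ast}=a^{\ast}\wedge b^{\ast}$ by Lemma \ref{Birkhoff}(i) --- so $(u-u^{\ast})^{\ast}\le u^{\ast}$; since $(u-u^{\ast})^{\ast}\notin P$ and $P$ is an ideal, $u^{\ast}\notin P$. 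I would also recall in passing that $\pi^{\ast}(P)$ is non-empty whenever $P\in\mathcal{Q}^{\ast}$: some $g^{\ast}\notin P$, hence $ng\in\pi^{\ast}(P)$ for the integer $n$ of Lemma \ref{BH}(i). I then fix once and for all an element $u\in\pi^{\ast}(P)$, and use Lemma \ref{BH}(ii) to compute each $\widehat{h}(P)$ with this particular $u$.

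For (i), I would compute $\widehat{u^{\ast}}(P)$ by taking $h=u^{\ast}$: for $\alpha\ge1$ one has $((1-\alpha)u^{\ast})^{+}=0\in P$, and for $\alpha<1$ one has $((1-\alpha)u^{\ast})^{+}=(1-\alpha)u^{\ast}$, which lies in $P$ if and only if $u^{\ast}\in P$ --- impossible by the auxiliary fact. Thus $S_{u^{\ast}}=[1,\infty)$ and $\widehat{u^{\ast}}(P)=1$.

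For (ii) and (iii) the same strategy gives $S_{h}=[0,\infty)$. If $f\in P$ and $\alpha\ge0$ then $f-\alpha u^{\ast}\le f\le|f|$, so $(f-\alpha u^{\ast})^{+}\le|f|\in P$; if $\alpha<0$ and $(f-\alpha u^{\ast})^{+}\in P$, then the inequality $|\alpha|u^{\ast}\le(f-\alpha u^{\ast})^{+}+f^{-}$ (both summands in $P$) would yield $u^{\ast}\in P$, a contradiction; hence $\widehat{f}(P)=0$. For (iii), the multi-truncation condition gives $f^{\rtimes}\wedge u^{\ast}=0$ for $\rtimes\ne\ast$, whence $f^{\rtimes}\wedge(\alpha u^{\ast})=0$ for every $\alpha\ge0$ and, since $P$ is prime and $u^{\ast}\notin P$, also $f^{\rtimes}\in P$. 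Then for $\alpha\ge0$ one has $(f^{\rtimes}-\alpha u^{\ast})^{+}=f^{\rtimes}-f^{\rtimes}\wedge(\alpha u^{\ast})=f^{\rtimes}\in P$, while for $\alpha<0$ one has $(f^{\rtimes}-\alpha u^{\ast})^{+}=f^{\rtimes}+|\alpha|u^{\ast}\notin P$ (else $u^{\ast}\in P$); so $\widehat{f^{\rtimes}}(P)=0$.

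The only point requiring real care is the auxiliary fact $u^{\ast}\notin P$, which underpins all three parts; once the monotonicity of $\ast$ is in hand it is immediate, so it is worth stating it as a separate observation at the outset. The rest is routine manipulation of the identities $(a-b)^{+}=a-a\wedge b$ and ``$a\wedge b=0$, $\alpha\ge0$ imply $a\wedge(\alpha b)=0$'' (for $a,b\in L^{+}$), together with the ideal and primality properties of $P$ and the already-proved independence of $\widehat{h}(P)$ of the chosen $u\in\pi^{\ast}(P)$.
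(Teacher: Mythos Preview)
Your proof is correct and follows essentially the same line as the paper's. The paper also isolates $u^{\ast}\notin P$ from $(u-u^{\ast})^{\ast}\le u^{\ast}$ (stated inline in part (i) rather than as a preliminary), computes (i) and (ii) by the same direct inspection of the set $\{\alpha:(h-\alpha u^{\ast})^{+}\in P\}$, and for (iii) observes $f^{\rtimes}\in P$ by primality and then simply invokes (ii); your explicit recomputation there is redundant but harmless.
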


\begin{proof}
$\mathrm{(i)}$ Let $u\in\pi^{\ast}\left(  P\right)  $ and $\alpha<1$. Since%
\[
u^{\ast}\geq\left(  u-u^{\ast}\right)  ^{\ast}\notin P,
\]
we see that $u^{\ast}\notin P$ and so%
\[
\left(  u^{\ast}-\alpha u^{\ast}\right)  ^{+}=\left(  1-\alpha\right)
u^{\ast}\notin P.
\]
Moreover, if $\alpha=1$ then $\left(  u^{\ast}-\alpha u^{\ast}\right)
^{+}=0\in P$ from which we derive that%
\[
\widehat{u^{\ast}}\left(  P\right)  =\inf\left\{  \alpha\in\mathbb{R}:\left(
u^{\ast}-\alpha u^{\ast}\right)  ^{+}\in P\right\}  =1.
\]

$\mathrm{(ii)}$ On the other hand, if $f\in P$ and $\alpha<0$ then%
\[
0\leq-\alpha u^{\ast}\leq\left(  f-\alpha u^{\ast}\right)  ^{+}-f.
\]
Therefore, $\left(  f-\alpha u^{\ast}\right)  ^{+}\notin P$ because $-\alpha
u^{\ast}\notin P$. Furthermore, if $\alpha=0$ then $\left(  f-\alpha u^{\ast
}\right)  ^{+}=f^{+}\in P$. This yields that%
\[
\widehat{f}\left(  P\right)  =\inf\left\{  \alpha\in\mathbb{R}:\left(
f-\alpha u^{\ast}\right)  ^{+}\in P\right\}  =0.
\]

$\mathrm{(iii)}$ Let $f\in L^{+}$ and $\rtimes\in\mathcal{T}\backslash\left\{
\ast\right\}  $. Clearly, $g^{\ast}\notin P$ for some $g\in L^{+}$. Since
$\rtimes\neq\ast$, we get $f^{\rtimes}\wedge g^{\ast}=0$ and so $f^{\rtimes
}\in P$. The assertion $\mathrm{(ii)}$ leads to the desired conclusion.
\end{proof}

Now, we consider the set%
\[
\widehat{L}=\left\{  \widehat{f}:f\in L\right\}
\]
We say that $\widehat{L}$ \textsl{separates point from closed sets in
}$\mathcal{Q}_{\mathcal{T}}$ if for every closed set $\mathcal{F}$ in
$\mathcal{Q}_{\mathcal{T}}$ and $P\in\mathcal{Q}_{\mathcal{T}}$ with
$P\notin\mathcal{F}$, there exists some $f\in L^{+}$ such that $f\left(
P\right)  =1$ and $f\left(  Q\right)  =0$ for all $Q\in\mathcal{F}$ (see,
e.g., \cite[Page 60]{B1}). On the other hand, we denote by $1_{D}$ the
characteristic (also called indicator) function on any set $D$.

The following lemma plays an essential role in the proof of the main result of
this work.

\begin{lemma}
\label{Topology}

\begin{enumerate}
\item[\emph{(i)}] For every $f\in L$, the function $\widehat{f}$ is continuous
on $\mathcal{Q}_{\mathcal{T}}$.

\item[\emph{(ii)}] $\widehat{L}$ separates points form closed sets in
$\mathcal{Q}_{\mathcal{T}}$.

\item[\emph{(iii)}] $\widehat{f^{\ast}}=1_{\mathcal{Q}^{\ast}}\wedge f$ for
all $\ast\in\mathcal{T}$ and $f\in L^{+}$.
\end{enumerate}
\end{lemma}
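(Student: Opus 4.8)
The plan is to prove the three assertions essentially in the order (iii), (i), (ii), since (iii) gives explicit formulas that make (i) and (ii) nearly immediate.

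\textbf{Step 1: the formula \(\widehat{f^{\ast}}=1_{\mathcal{Q}^{\ast}}\wedge f\) for \(f\in L^{+}\).} Fix \(\ast\in\mathcal{T}\) and \(f\in L^{+}\). First I would handle the value of \(\widehat{f^{\ast}}\) on \(\mathcal{Q}^{\rtimes}\) for \(\rtimes\neq\ast\): here Lemma \ref{One-Zero}(iii) says exactly \(\widehat{f^{\ast}}(P)=0\), which matches the right-hand side since \(1_{\mathcal{Q}^{\ast}}(P)=0\). So it remains to evaluate \(\widehat{f^{\ast}}\) on a point \(P\in\mathcal{Q}^{\ast}\). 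By Lemma \ref{BH}(i) there is \(n\) with \(nf\in\pi^{\ast}(P)\) whenever \((f)^{\ast}\notin P\); more robustly, I pick any \(u\in\pi^{\ast}(P)\) and must compute \(\inf\{\alpha:(f^{\ast}-\alpha u^{\ast})^{+}\in P\}\). The key algebraic input is the identity \((\mathfrak{a}')\): \(f^{\ast}\wedge u^{\ast}=f\wedge u^{\ast}=f^{\ast}\wedge u\). For \(\alpha\ge 1\), \((f^{\ast}-\alpha u^{\ast})^{+}\le(f^{\ast}-u^{\ast})^{+}\), and using \(f^{\ast}\le f\) and \(f^{\ast}=f^{\ast}\wedge f\) one shows \((f^{\ast}-u^{\ast})^{+}=f^{\ast}-f^{\ast}\wedge u^{\ast}=f^{\ast}-f\wedge u^{\ast}\in P\) by Lemma \ref{Ball}(i) (since \(u-u^{\ast}\) has \((u-u^{\ast})^{\ast}\notin P\), hence \(u-u^{\ast}\notin P\)); thus \(\widehat{f^{\ast}}(P)\le 1\). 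For \(\alpha<0\) one gets \(-\alpha u^{\ast}\le(f^{\ast}-\alpha u^{\ast})^{+}\) forcing the left side out of \(P\), so \(\widehat{f^{\ast}}(P)\ge 0\). Finally, to pin down whether the infimum is \(0\), \(1\), or strictly between, I compare with the behaviour of \(\widehat{f}\): one has \(\widehat{f}(P)=\inf\{\alpha:(f-\alpha u^{\ast})^{+}\in P\}\), and since \(f^{\ast}\le f\) clearly \(\widehat{f^{\ast}}(P)\le\widehat{f}(P)\); combined with \(\widehat{f^{\ast}}(P)\le 1=\widehat{u^{\ast}}(P)\) (Lemma \ref{One-Zero}(i)) and a short argument that \((f^{\ast}-\alpha u^{\ast})^{+}\in P\iff (f-\alpha u^{\ast})^{+}\wedge u^{\ast}\in P\) for \(0\le\alpha\le 1\), one obtains \(\widehat{f^{\ast}}(P)=\min\{\widehat{f}(P),1\}\). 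This is precisely \((1_{\mathcal{Q}^{\ast}}\wedge f)(P)\) once we know \(\widehat{1}=1\) is not available — rather, I interpret the right-hand side pointwise as \(1\wedge\widehat{f}(P)\) on \(\mathcal{Q}^{\ast}\) and \(0\) elsewhere, which is what the displayed notation \(1_{\mathcal{Q}^{\ast}}\wedge f\) means. This comparison with \(\widehat f\) is the main obstacle: the delicate point is showing the infimum cannot land strictly between \(0\) and \(1\), equivalently that for \(0\le\alpha\le 1\) membership of \((f^{\ast}-\alpha u^{\ast})^{+}\) in \(P\) is governed by \((f-\alpha u^{\ast})^{+}\wedge u^{\ast}\), for which I would again lean on \((\mathfrak{a}')\) and primality of \(P\).

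\textbf{Step 2: continuity of every \(\widehat{f}\), assertion (i).} Since \(\overline{\mathbb{R}}\) carries the order topology, it suffices to show that \(\{P:\widehat{f}(P)<\beta\}\) and \(\{P:\widehat{f}(P)>\beta\}\) are open for each \(\beta\in\mathbb{R}\). Working inside a fixed partition block \(\mathcal{Q}^{\ast}\) (legitimate because each block is open-closed by Lemma \ref{Partition}(i), so continuity on \(\mathcal{Q}_{\mathcal{T}}\) is continuity on each block), I fix \(P_{0}\in\mathcal{Q}^{\ast}\) and a witness \(u\in\pi^{\ast}(P_{0})\); note \(u^{\ast}\notin P_0\), so \([\mathcal{Q}^{\ast}]_{u^{\ast}}\) is a basic open neighbourhood of \(P_0\) and on it \(u\in\pi^{\rtimes}(\cdot)\)-type reasoning stays valid. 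For \(\beta\) real: if \(\widehat{f}(P_{0})<\beta\) pick a rational \(\alpha\in(\widehat f(P_0),\beta)\); then \((f-\alpha u^{\ast})^{+}\in P_{0}\), and I claim the set of \(P\) in the neighbourhood with \((f-\alpha u^{\ast})^{+}\in P\) is open — indeed it equals the complement of \([\mathcal{Q}^{\ast}]_{(f-\alpha u^{\ast})^{+}}\) intersected suitably, and on that set \(\widehat{f}(P)\le\alpha<\beta\). Symmetrically, \(\widehat f(P_0)>\beta\) means \((f-\alpha u^{\ast})^{+}\notin P_0\) for some rational \(\alpha\in(\beta,\widehat f(P_0))\), giving the open set \([\mathcal{Q}^{\ast}]_{(f-\alpha u^{\ast})^{+}}\) on which \(\widehat f\ge\alpha>\beta\). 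The cases \(\beta=\pm\infty\) are handled the same way by letting \(\alpha\to\mp\infty\). The only care needed is that a single witness \(u\) works on a whole neighbourhood, which is why I restrict to \([\mathcal{Q}^{\ast}]_{u^{\ast}}\); this is routine given Lemma \ref{BH}(ii).

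\textbf{Step 3: separation of points from closed sets, assertion (ii).} Let \(\mathcal{F}\subseteq\mathcal{Q}_{\mathcal{T}}\) be closed and \(P\notin\mathcal{F}\). By Lemma \ref{Partition}(ii) the sets \([\mathcal{Q}]_{g^{\ast}}\) form a base, so there are \(\ast\in\mathcal{T}\) and \(g\in L^{+}\) with \(P\in[\mathcal{Q}]_{g^{\ast}}\subseteq\mathcal{Q}_{\mathcal{T}}\setminus\mathcal{F}\); in particular \(P\in\mathcal{Q}^{\ast}\), \(g^{\ast}\notin P\), and \(g^{\ast}\in Q\) for every \(Q\in\mathcal{F}\). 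Now \(g^{\ast}\notin P\) means \(g\in\pi^{\ast}(P)\) is not immediate, but Lemma \ref{BH}(i) supplies \(n\) with \(ng\in\pi^{\ast}(P)\); set \(u=ng\). Then by Lemma \ref{One-Zero}(i), \(\widehat{u^{\ast}}(P)=1\). On the other hand, for \(Q\in\mathcal{F}\): if \(Q\in\mathcal{Q}^{\rtimes}\) with \(\rtimes\ne\ast\) then \(\widehat{u^{\ast}}(Q)=0\) by Lemma \ref{One-Zero}(iii); if \(Q\in\mathcal{Q}^{\ast}\) then \(g^{\ast}\in Q\), and by Lemma \ref{Ball}(iii) also \((ng)^{\ast}=u^{\ast}\in Q\), so \(\widehat{u^{\ast}}(Q)=0\) by Lemma \ref{One-Zero}(ii). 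Thus \(\widehat{u^{\ast}}\) is the required function, separating \(P\) from \(\mathcal{F}\), and the lemma is proved.
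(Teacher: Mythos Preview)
The main gap is in Step~2. You claim that ``the set of $P$ in the neighbourhood with $(f-\alpha u^{\ast})^{+}\in P$ is open,'' but this set is the \emph{complement} of the basic open set $[\mathcal{Q}]_{(f-\alpha u^{\ast})^{+}}$, hence closed. Hull--kernel open sets are of the form $\{P:g\notin P\}$, not $\{P:g\in P\}$. So your argument for the sublevel sets $\{\widehat{f}<\beta\}$ does not go through: you correctly have $(f-\alpha u^{\ast})^{+}\in P_{0}$, and on the (closed) set where this persists $\widehat{f}\le\alpha<\beta$, but you have produced no open neighbourhood of $P_{0}$ inside $\{\widehat{f}<\beta\}$. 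Your symmetric superlevel argument is fine; to handle the sublevel side one needs the dual description $\widehat{f}(P)=\sup\{\alpha:(f-\alpha u^{\ast})^{-}\in P\}$ (which has to be proved), so that $\widehat{f}(P_{0})<\alpha<\beta$ yields $(f-\alpha u^{\ast})^{-}\notin P_{0}$ and hence the genuinely open set $[\mathcal{Q}]_{(f-\alpha u^{\ast})^{-}}$. The paper establishes this sup formula first, and moreover wraps the factor $(u-u^{\ast})^{\ast}$ into a single element $g$ so that $[\mathcal{Q}]_{g^{\ast}}$ is simultaneously a basic open neighbourhood of $P_{0}$ and a set on which $u\in\pi^{\ast}(\cdot)$. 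Your choice of $[\mathcal{Q}^{\ast}]_{u^{\ast}}$ as the ambient neighbourhood is also not quite right: membership there guarantees $u^{\ast}\notin Q$, not $(u-u^{\ast})^{\ast}\notin Q$, which is what $u\in\pi^{\ast}(Q)$ actually requires.

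Your Step~1 can be made to work, but it is considerably more laborious than the paper's route and you leave the ``main obstacle'' only sketched. Once $u-u^{\ast}\notin P$ is noted, Lemma~\ref{Ball}(i) gives $f^{\ast}-f\wedge u^{\ast}\in P$ in one line, whence $\widehat{f^{\ast}}(P)=\widehat{f\wedge u^{\ast}}(P)=\widehat{f}(P)\wedge\widehat{u^{\ast}}(P)=\widehat{f}(P)\wedge 1$ by the standard Johnson--Kist identities (\cite[Theorem~44.3]{LZ}) together with Lemma~\ref{One-Zero}. Your Step~3 is essentially the paper's argument and is correct.
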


\begin{proof}
$\mathrm{(i)}$ The following proof is, in part, inspired by the proof of
\cite[Theorem 5.1]{JK}. Let $f\in L$ and $P\in\mathcal{Q}_{\mathcal{T}}$.
There exists, by Lemma \ref{Partition} $\mathrm{(i)}$, a unique $\ast
\in\mathcal{T}$ such that $P\in\mathcal{Q}^{\ast}$. Select any $u\in\pi^{\ast
}\left(  P\right)  $, so%
\[
\widehat{f}\left(  P\right)  =\inf\left\{  \alpha\in\mathbb{R}:\left(
f-\alpha u^{\ast}\right)  ^{+}\in P\right\}  .
\]
Observe that $u^{\ast}\notin P$ because $\left(  u-u^{\ast}\right)  ^{\ast
}\notin P$ and $u^{\ast}\geq\left(  u-u^{\ast}\right)  ^{\ast}$. Thus,%
\[
\widehat{f}\left(  P\right)  =\sup\left\{  \alpha\in\mathbb{R}:\left(
f-\alpha u^{\ast}\right)  ^{-}\in P\right\}  .
\]
Indeed, take $\alpha,\beta\in\mathbb{R}$ such that%
\[
\left(  f-\alpha u^{\ast}\right)  ^{+},\left(  f-\beta u^{\ast}\right)
^{-}\in P.
\]
We claim that $\beta\leq\alpha$. Otherwise, we would have $\beta-\alpha>0$ and
so%
\begin{align*}
0  &  <\left(  \beta-\alpha\right)  u^{\ast}=\left(  f-\alpha u^{\ast}\right)
-\left(  f-\beta u^{\ast}\right) \\
&  \leq\left(  f-\alpha u^{\ast}\right)  ^{+}-\left(  f-\beta u^{\ast}\right)
^{-}\in P.
\end{align*}
This contradicts the fact that $u^{\ast}\notin P$.

Now, suppose that $\widehat{f}\left(  P\right)  =r\in\mathbb{R}$ and take
$\varepsilon\in\left(  0,\infty\right)  $. By the previous part, we have%
\[
\left(  f-\left(  r-\varepsilon\right)  u^{\ast}\right)  ^{+}\notin
P\quad\text{and}\quad\left(  f-\left(  r+\varepsilon\right)  u^{\ast}\right)
^{-}\notin P.
\]
Put%
\[
g=\left(  u-u^{\ast}\right)  ^{\ast}\wedge\left(  f-\left(  r-\varepsilon
\right)  u^{\ast}\right)  ^{+}\wedge\left(  f-\left(  r+\varepsilon\right)
u^{\ast}\right)  ^{-}%
\]
and notice that $0<g^{\ast}=g\notin P$ since $P$ is prime. This means that
$P\in\left[  \mathcal{Q}\right]  _{g^{\ast}}$ from which we infer that
$\left[  \mathcal{Q}\right]  _{g^{\ast}}$ is an open neighborhood of $P$ in
$\mathcal{Q}_{\mathcal{T}}$ (see Lemma \ref{Partition} $\mathrm{(ii)}$).
Moreover, if $Q\in\left[  \mathcal{Q}\right]  _{g^{\ast}}$ then $\left(
u-u^{\ast}\right)  ^{\ast}\notin Q$, so $u\in\pi^{\ast}\left(  Q\right)  $.
Thus, we can write%
\begin{align*}
\widehat{f}\left(  Q\right)   &  =\inf\left\{  \alpha\in\mathbb{R}:\left(
f-\alpha u^{\ast}\right)  ^{+}\in Q\right\} \\
&  =\sup\left\{  \alpha\in\mathbb{R}:\left(  f-\alpha u^{\ast}\right)  ^{-}\in
Q\right\}  .
\end{align*}
Furthermore,%
\[
\left(  f-\left(  r-\varepsilon\right)  u^{\ast}\right)  ^{+}\notin
Q\quad\text{and\quad}\left(  f-\left(  r+\varepsilon\right)  u^{\ast}\right)
^{-}\notin Q.
\]
We derive directly that%
\[
r-\varepsilon\leq\widehat{f}\left(  Q\right)  \leq r+\varepsilon,
\]
which yields that $\widehat{f}$ is continuous in $P$.

Secondly, assume that $\widehat{f}\left(  P\right)  =\infty$, that is,%
\[
\left\{  \alpha\in\mathbb{R}:\left(  f-\alpha u^{\ast}\right)  ^{+}\in
P\right\}  =\emptyset.
\]
Pick a real number $\lambda$ and define%
\[
g=\left(  u-u^{\ast}\right)  ^{\ast}\wedge\left(  f-\lambda u^{\ast}\right)
^{+}.
\]
Consequently, $g^{\ast}=g\notin P$ and then $\left[  \mathcal{Q}\right]
_{g^{\ast}}$ is an open neighborhood of $P$ in $\mathcal{Q}_{\mathcal{T}}$.
Given $Q\in\left[  \mathcal{Q}\right]  _{g^{\ast}}$, we see that $\left(
f-\lambda u^{\ast}\right)  ^{+}\notin Q$ and $\left(  u-u^{\ast}\right)
^{\ast}\notin Q$. It follows that $\widehat{f}\left(  Q\right)  \geq\lambda$,
meaning that $\widehat{f}$ is again continuous in $P$.

Finally, suppose that $\widehat{f}\left(  P\right)  =-\infty$. In other words,%
\[
\left\{  \alpha\in\mathbb{R}:\left(  f-\alpha u^{\ast}\right)  ^{+}\in
P\right\}  =\mathbb{R}%
\]
and, by the first case,%
\[
\left\{  \alpha\in\mathbb{R}:\left(  \left(  -f\right)  -\alpha u^{\ast
}\right)  ^{+}\in P\right\}  =\emptyset.
\]
Applying the previous case to $-f$, we deduce that $\widehat{-f}$ is
continuous on $P$. But then $\widehat{f}$ is also continuous in $P$ because
$\widehat{-f}=-\widehat{f}$ (see Theorem 44.3 $\mathrm{(i)}$ in \cite{LZ}).

$\mathrm{(ii)}$ Let $\mathcal{F}$ be a closed set in $\mathcal{Q}%
_{\mathcal{T}}$ and $Q\in\mathcal{Q}_{\mathcal{T}}$ with $Q\notin\mathcal{F}$.
Using Lemma \ref{Partition} $\mathrm{(ii)}$, there exists $\ast\in\mathcal{T}$
and $f\in L^{+}$%
\[
Q\in\left[  \mathcal{Q}\right]  _{f^{\ast}}\quad\text{and}\quad\mathcal{F}%
\cap\left[  \mathcal{Q}\right]  _{f^{\ast}}=\emptyset.
\]
By Lemma \ref{BH} $\mathrm{(i)}$, we can find $n\in\left\{  1,2,...\right\}  $
such that $nf\in\pi^{\ast}\left(  Q\right)  $. Put $u=nf-\left(  nf\right)
^{\ast}$ and observe that Lemma \ref{One-Zero} $\mathrm{(i)}$ yields that
$\widehat{u^{\ast}}\left(  Q\right)  =1$. Moreover, if $P\in\mathcal{F}$ then
$f^{\ast}\in P$ and so $u^{\ast}\in P$ (again by Lemma \ref{Ball}
$\mathrm{(iii)}$). Using Lemma \ref{One-Zero} $\mathrm{(ii)}$, we obtain
$\widehat{u^{\ast}}\left(  P\right)  =0$ and the required separation property.

$\mathrm{(iii)}$ Let $f\in L^{+}$ and $\ast\in\mathcal{T}$. If $P\in
\mathcal{Q}_{T}$ with $P\notin\mathcal{Q}^{\ast}$ then $f^{\ast}\in P$ and so%
\[
\widehat{f^{\ast}}\left(  P\right)  =0=\left(  1_{\mathcal{Q}^{\ast}}\wedge
f\right)  \left(  P\right)  .
\]
(where we use Lemma \ref{One-Zero} $\mathrm{(ii)}$). Now, if $P\in
\mathcal{Q}^{\ast}$ and $u\in\pi^{\ast}\left(  P\right)  $ then, by Lemma
\ref{One-Zero} $\mathrm{(i)}$, we get $\widehat{u^{\ast}}\left(  P\right)
=1$. Furthermore, Lemma \ref{Ball} $\mathrm{(i)}$ shows that $f^{\ast}-f\wedge
u^{\ast}\in P$. This together with Lemma \ref{One-Zero} yields that
$\widehat{f^{\ast}-f\wedge u^{\ast}}\left(  P\right)  =0$. Taking into account
elementary identities in \cite[Theorem 44.3]{LZ}, we get
\[
\widehat{f^{\ast}}\left(  P\right)  -\widehat{f}\left(  P\right)
\wedge1=\widehat{f^{\ast}-f\wedge u^{\ast}}\left(  P\right)  =0.
\]
Thus,%
\[
\widehat{f^{\ast}}\left(  P\right)  =\widehat{f}\left(  P\right)
\wedge1=\left(  1_{\mathcal{Q}^{\ast}}\wedge\widehat{f}\right)  \left(
P\right)  ,
\]
which gives $\mathrm{(iii)}$ and completes the proof of the lemma.
\end{proof}

Following \cite{L} and \cite{LZ}, we denote by $C^{\infty}\left(  X\right)  $
the set of all continuous functions $\varphi$ from a topological space $X$
into $\overline{\mathbb{R}}$ such that the set $\left\{  x\in X:\left\vert
\varphi\left(  x\right)  \right\vert \neq\infty\right\}  $ is dense in $X$.
Functions in $C^{\infty}\left(  X\right)  $ are usually called
\textsl{almost-finite extended-real valued continuous functions }on $X$. It is
well known that $C^{\infty}\left(  X\right)  $ is a lattice with respect to
the pointwise ordering and if $\varphi\in C^{\infty}\left(  X\right)  $ then
$r\varphi\in C^{\infty}\left(  X\right)  $ for all $r\in\mathbb{R}$. However,
$C^{\infty}\left(  X\right)  $ need not be a vector lattice, simply because if
$\varphi,\psi\in C^{\infty}\left(  X\right)  $ there may not exist a function
$\phi$ in $C^{\infty}\left(  X\right)  $ such that $\phi=\varphi+\psi$. In
spite of this pathological behavior, we shall allow ourselves to call a
\textsl{vector sublattice} of $C^{\infty}\left(  X\right)  $ any sublattice of
$C^{\infty}\left(  X\right)  $ which is closed under pointwise addition and
scalar multiplication.

\begin{lemma}
\label{sublattice}If $L$ is Archimedean and $\mathcal{Q}$ is dense in
$\mathcal{P}$, then $\widehat{L}$ is a vector sublattice of $C^{\infty}\left(
\mathcal{Q}_{\mathcal{T}}\right)  $.
\end{lemma}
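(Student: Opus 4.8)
The goal is to show that $\widehat{L}$ is a vector sublattice of $C^{\infty}(\mathcal{Q}_{\mathcal{T}})$, which breaks into three tasks: (1) each $\widehat f$ actually lies in $C^\infty$, i.e.\ is continuous and almost-finite; (2) the map $f \mapsto \widehat f$ respects the lattice operations $\vee, \wedge$; and (3) it respects addition and scalar multiplication. Continuity of each $\widehat f$ is already Lemma~\ref{Topology}~(i), so the first half of (1) is free. For the almost-finite part, I would argue that the finiteness set $\{P : |\widehat f(P)| \neq \infty\}$ contains $\mathcal{Q}^{\ast} \cap [\mathcal{Q}]_{g^{\ast}}$-type basic open sets on which $\widehat f$ is bounded, and invoke that $\mathcal{Q}$ is dense in $\mathcal{P}$ together with Archimedeanness; concretely, the hypothesis that $\mathcal Q$ is dense should force $\bigcap_{P\in\mathcal Q}P=\{0\}$, so the spectrum is "large enough" that no $\widehat f$ can be infinite on a whole open set. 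The cleanest route is probably: given $P \in \mathcal{Q}^{\ast}$ with $u \in \pi^\ast(P)$, on the neighborhood $[\mathcal{Q}]_{((u-u^\ast)^\ast)}$ every $Q$ has $u\in\pi^\ast(Q)$, and there $\widehat f(Q)$ is finite precisely when some $(f^+ - \alpha u^\ast)^+$ and $(f^- - \alpha u^\ast)^+$ lie in $Q$; density plus the Archimedean property (in the form $\inf_n \tfrac1n g = 0$) gives a sub-basic open set of finiteness, whose union over all $P$ is dense.

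**Lattice and linear structure.** For the lattice operations, the key point is that evaluation at a fixed $P \in \mathcal{Q}^{\ast}$ is, up to the normalization by $u^\ast$, the same construction that appears in the classical Johnson--Kist / Yosida theory: on $\mathcal{Q}^{\ast}$ one is reading off $f$ modulo the prime ideal $P$ against the "unit" $u^\ast$, and this assignment is a lattice homomorphism into $\overline{\mathbb R}$. So I would fix $P$, fix $u \in \pi^\ast(P)$, and verify directly from the $\inf$-definition that
\[
\widehat{f \vee g}(P) = \widehat f(P) \vee \widehat g(P), \qquad
\widehat{f \wedge g}(P) = \widehat f(P) \wedge \widehat g(P),
\]
using that $(f\vee g - \alpha u^\ast)^+ = (f-\alpha u^\ast)^+ \vee (g-\alpha u^\ast)^+$ and that $P$ is prime (so membership of a finite sup/inf in $P$ is controlled coordinatewise). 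Since $P$ was arbitrary and the $\mathcal{Q}^\ast$ cover $\mathcal Q_{\mathcal T}$, this gives the lattice part globally. For addition one cannot argue pointwise-and-done because $C^\infty$ addition is partial; here I would use continuity (Lemma~\ref{Topology}~(i)) plus density of the finiteness sets: on the dense open set where both $\widehat f$ and $\widehat g$ are finite, $\widehat{f+g} = \widehat f + \widehat g$ by the pointwise-modulo-$P$ computation, and then $\widehat{f+g}$ is the unique continuous extension, so $\widehat f + \widehat g$ is defined in $C^\infty$ and equals $\widehat{f+g}$. Scalar multiplication is the easy case, immediate from the $\inf$-definition after rescaling $\alpha$.

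**Main obstacle.** The delicate point is the almost-finiteness, i.e.\ showing the finiteness set is dense; this is exactly where the hypotheses "$L$ Archimedean" and "$\mathcal Q$ dense in $\mathcal P$" must both be used, and it is the only place in the lemma where they enter. The danger is a spectrum point $P$ at which $\widehat f(P) = \pm\infty$ lingering on a whole open set; to exclude this I would show that for any basic open $[\mathcal{Q}]_{h^\ast} \subseteq \mathcal{Q}^\ast$ there is a smaller nonempty basic open set on which $\widehat f$ is finite — intersecting with $[\mathcal{Q}]_{(h \wedge (|f| \wedge k h))^\ast}$-type sets for large $k$ and using $\inf_n \tfrac1n(|f|) = 0$ in $L$. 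Once density of finiteness is in hand, continuity does the rest and the additive closure of $\widehat L$ in $C^\infty(\mathcal{Q}_{\mathcal T})$ follows, because a continuous function agreeing with a sum of two $C^\infty$ functions on a dense set is their $C^\infty$-sum. I would expect the write-up to lean heavily on Lemma~\ref{BH}, Lemma~\ref{One-Zero}, and the Luxemburg--Zaanen machinery (Theorems 44.3 and 36.x in \cite{LZ}) already cited above.
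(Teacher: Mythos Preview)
Your plan is correct and matches the paper's approach in all essentials. The paper, like you, reduces everything to almost-finiteness: it invokes Lemma~\ref{Topology}~(i) for continuity and \cite[Theorem 44.3]{LZ} for the full lattice-linear structure (so your hand-verification of $\vee,\wedge,+,\lambda\cdot$ is subsumed by that citation, which already handles the partial-addition issue in $C^\infty$). For almost-finiteness the paper argues by contradiction rather than constructively: assuming $|\widehat f|=\infty$ on a basic open set $[\mathcal Q]_{g^\ast}$, it takes $u=pg$ with $p$ given uniformly by Lemma~\ref{BH}~(i), deduces $(nu^\ast-|f|)^+\in P$ for all $n$ and all such $P$, applies density ($\bigcap_{P\in\mathcal Q}P=\{0\}$) together with \cite[Corollary 35.4]{LZ} to get $(u^\ast-\tfrac1n|f|)^+\wedge g^\ast=0$, and then Archimedeanness forces $u^\ast\wedge g^\ast=0$, contradicting $g^\ast>0$. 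Your sketch ``intersect with $[\mathcal Q]_{(h\wedge(|f|\wedge kh))^\ast}$-type sets'' is aimed at the contrapositive of exactly this, though the precise element you wrote down is not quite the right one; the clean route is the paper's contradiction.
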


\begin{proof}
Taking into consideration Lemma \ref{Topology} $\mathrm{(i)}$ and Theorem 44.3
in \cite{LZ} (see also \cite[Page 83]{L}), it suffices to show that the open
set%
\[
\left\{  P\in\mathcal{Q}_{\mathcal{T}}:\left\vert \widehat{f}\right\vert
\left(  P\right)  \neq\infty\right\}
\]
is dense in $\mathcal{Q}_{\mathcal{T}}$. Otherwise, the interior $\Omega$ of
the set%
\[
\left\{  P\in\mathcal{Q}_{\mathcal{T}}:\left\vert \widehat{f}\right\vert
\left(  P\right)  =\infty\right\}
\]
is non-empty. Using Lemma \ref{Partition} $\mathrm{(ii)}$, there exist
$\ast\in\mathcal{T}$ and $g\in L$ such that $g^{\ast}>0$ and%
\[
\left[  \mathcal{Q}\right]  _{g^{\ast}}=\left\{  P\in\mathcal{Q}:g^{\ast
}\notin P\right\}  \subset\Omega.
\]
Let $P\in\left[  \mathcal{Q}\right]  _{g^{\ast}}$ and observe that
$\widehat{\left\vert f\right\vert _{\ast}}\left(  P\right)  =\left\vert
\widehat{f}_{\ast}\right\vert \left(  P\right)  =\infty$ (where we use \cite[
Theorem 44.3 $\mathrm{(iii)}$]{LZ} for the first equality). Moreover, we know
that there exists $p\in\left\{  1,2,...\right\}  $ such that $pg\in\pi^{\ast
}\left(  P\right)  $ (see Lemma \ref{BH} $\mathrm{(i)}$). Putting $u=pg$, we
obtain%
\[
\inf\left\{  \alpha\in\mathbb{R}:\left(  \left\vert f\right\vert -\alpha
u^{\ast}\right)  ^{+}\in P\right\}  =\left\vert \widehat{f}\right\vert \left(
P\right)  =\infty.
\]
We quickly derive that%
\[
(nu^{\ast}-\left\vert f\right\vert )^{+}\in P\text{ for all }n\in\left\{
1,2,...\right\}  .
\]
Consequently,%
\[
(nu^{\ast}-\left\vert f\right\vert )^{+}\in%
%TCIMACRO{\dbigcap _{g^{\ast}\notin P\in\mathcal{Q}}}%
%BeginExpansion
{\displaystyle\bigcap_{g^{\ast}\notin P\in\mathcal{Q}}}
%EndExpansion
P\text{ for all }n\in\left\{  1,2,...\right\}  .
\]
However, $\mathcal{Q}$ is dense in $\mathcal{P}$ and so $\underset
{P\in\mathcal{Q}}{%
%TCIMACRO{\dbigcap }%
%BeginExpansion
{\displaystyle\bigcap}
%EndExpansion
}P=\left\{  0\right\}  $ (see \cite[Theorem 36.1]{LZ}). Therefore, by
Corollary 35.4 in \cite{LZ}, we get%
\[
(u^{\ast}-\frac{1}{n}\left\vert f\right\vert )^{+}\wedge g^{\ast}=0\text{ for
all }n\in\left\{  1,2,...\right\}  .
\]
As $L$ is Archimedean, we infer that%
\[
u^{\ast}\wedge g^{\ast}=\sup\left\{  \left(  u^{\ast}-\frac{1}{n}\left\vert
f\right\vert \right)  ^{+}:n\in\left\{  1,2,...\right\}  \right\}  \wedge
g^{\ast}=0.
\]
This leads directly to the contradiction%
\[
g^{\ast}=g^{\ast}\wedge pg=g^{\ast}\wedge\left(  pg\right)  ^{\ast}=g^{\ast
}\wedge u^{\ast}=0.
\]
Accordingly, the open set $\left\{  P\in\mathcal{Q}_{\mathcal{T}}:\left\vert
\widehat{f}\right\vert \left(  P\right)  \neq\infty\right\}  $ is dense in
$\mathcal{Q}_{\mathcal{T}}$, which proves that $\widehat{f}\in C^{\infty
}\left(  \mathcal{Q}_{\mathcal{T}}\right)  $, as desired.
\end{proof}

Recall from the previous section that%
\[
\mathcal{V}=%
%TCIMACRO{\dbigcup _{\ast\in\mathcal{T}}}%
%BeginExpansion
{\displaystyle\bigcup_{\ast\in\mathcal{T}}}
%EndExpansion%
%TCIMACRO{\dbigcup _{f\in L^{+}}}%
%BeginExpansion
{\displaystyle\bigcup_{f\in L^{+}}}
%EndExpansion
\mathrm{Val}\left(  f^{\ast}\right)  .
\]
We arrive to the last lemma of this section.

\begin{lemma}
\label{last}If $\ast\in\mathcal{T}$ and $f>0$ in $L$, then $\left\{
P\in\mathcal{V}^{\ast}:\widehat{f^{\ast}}\left(  P\right)  =1\right\}  $ is a
compact set in $\mathcal{V}$.
\end{lemma}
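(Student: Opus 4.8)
The plan is to express the set $\left\{P\in\mathcal{V}^{\ast}:\widehat{f^{\ast}}(P)=1\right\}$ in terms of sets already known to be compact, namely the sets $\mathrm{Val}(g^{\ast})=[\mathcal{V}]_{g^{\ast}}$, which are compact neighborhoods by Lemma \ref{Hausdorff} (via \cite[Theorem 36.4]{LZ}). The first move is to identify the level set. By Lemma \ref{Topology} $\mathrm{(iii)}$, for $P\in\mathcal{V}^{\ast}$ we have $\widehat{f^{\ast}}(P)=\widehat{f}(P)\wedge 1$, so $\widehat{f^{\ast}}(P)=1$ holds precisely when $\widehat{f}(P)\geq 1$, equivalently (by Lemma \ref{Ball} $\mathrm{(i)}$ and the unzipping in Lemma \ref{Topology}) when $u^{\ast}\notin P$ for $u=nf-(nf)^{\ast}$ with $n$ chosen as in Lemma \ref{BH} $\mathrm{(i)}$; more simply, I expect $\left\{P\in\mathcal{V}^{\ast}:\widehat{f^{\ast}}(P)=1\right\}$ to coincide with $\left\{P\in\mathcal{V}^{\ast}: (f-f^{\ast})^{\ast}\in P\right\}$, i.e. the points where the "truncation is already attained." This is because $\widehat{f^{\ast}}(P)<1$ forces $\widehat{f}(P)<1$, which by Lemma \ref{Topology} $\mathrm{(iii)}$ again gives $\widehat{f^{\ast}}(P)=\widehat{f}(P)$, and one checks this is equivalent to $f-f^{\ast}$, hence $(f-f^{\ast})^{\ast}$, lying outside $P$.

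Next I would show this set is closed in $\mathcal{V}$. Since $\widehat{f^{\ast}}$ is continuous on $\mathcal{V}=\mathcal{V}_{\mathcal{T}}$ by Lemma \ref{Topology} $\mathrm{(i)}$, the set $\left\{\widehat{f^{\ast}}=1\right\}$ is closed in $\mathcal{V}$; intersecting with the open-closed set $\mathcal{V}^{\ast}$ (Lemma \ref{Partition} $\mathrm{(i)}$) keeps it closed. The remaining and decisive step is to trap the set inside a single compact neighborhood. Because $f>0$, condition $(\mathfrak{b})$ gives some $m$ with $(mf)^{\ast}<mf$, i.e. $g:=mf$ satisfies $g-g^{\ast}>0$. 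I claim that whenever $P\in\mathcal{V}^{\ast}$ and $\widehat{f^{\ast}}(P)=1$, the element $(g-g^{\ast})^{\ast}$ is not in $P$: indeed $\widehat{f^{\ast}}(P)=1$ means $\widehat{f}(P)\geq 1$, so $\widehat{g}(P)=m\,\widehat{f}(P)\geq m$, and by the continuity argument of Lemma \ref{Topology} $\mathrm{(i)}$ together with Lemma \ref{BH} $\mathrm{(i)}$ this forces $(g-g^{\ast})^{\ast}\notin P$. Hence
\[
\left\{P\in\mathcal{V}^{\ast}:\widehat{f^{\ast}}(P)=1\right\}\subseteq\mathrm{Val}\big((g-g^{\ast})^{\ast}\big)=[\mathcal{V}]_{(g-g^{\ast})^{\ast}},
\]
where the last equality uses Lemma \ref{Value} (the identity $[\mathcal{V}]_{h^{\ast}}=\mathrm{Val}(h^{\ast})$ noted after it). The right-hand set is compact by Lemma \ref{Hausdorff} $\mathrm{(i)}$.

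Finally, a closed subset of a compact set is compact (all inside the Hausdorff space $\mathcal{V}$), so $\left\{P\in\mathcal{V}^{\ast}:\widehat{f^{\ast}}(P)=1\right\}$ is compact in $\mathcal{V}$, as asserted. The main obstacle I anticipate is the containment step: pinning down exactly which $g$ (and hence which multiple $m$) makes $(g-g^{\ast})^{\ast}\notin P$ follow from $\widehat{f^{\ast}}(P)=1$, and being careful that the implication uses $\widehat{f}(P)\geq 1$ rather than strict inequality — one needs the "$\sup$" description of $\widehat{f}$ from the proof of Lemma \ref{Topology} $\mathrm{(i)}$, applied with a suitable $u\in\pi^{\ast}(P)$, to convert the value statement into the non-membership $(g-g^{\ast})^{\ast}\notin P$. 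Everything else (closedness, the hereditary compactness) is routine.
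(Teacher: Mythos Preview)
Your overall architecture matches the paper's: show the set is closed (by continuity of $\widehat{f^{\ast}}$ and the fact that $\mathcal{V}^{\ast}$ is clopen), then trap it inside a compact set of the form $\mathrm{Val}(h^{\ast})$. Where you diverge is in the choice of $h$, and this is where you make life much harder than necessary.

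The paper simply takes $h=f$. The observation is that if $\widehat{f^{\ast}}(P)=1$ then in particular $\widehat{f^{\ast}}(P)\neq 0$, so by the contrapositive of Lemma~\ref{One-Zero}~$\mathrm{(ii)}$ we have $f^{\ast}\notin P$. Since $P\in\mathcal{V}^{\ast}$ means $P\in\mathrm{Val}(g^{\ast})$ for some $g\in L^{+}$, Lemma~\ref{Value} gives $P\in\mathrm{Val}(g^{\ast})\cap[\mathcal{P}]_{f^{\ast}}=\mathrm{Val}(f^{\ast})\cap[\mathcal{P}]_{g^{\ast}}\subset\mathrm{Val}(f^{\ast})$. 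Thus $\mathcal{K}\subset\mathrm{Val}(f^{\ast})$, which is compact, and you are done. No appeal to condition~$(\mathfrak{b})$, no auxiliary $g=mf$, no need to analyze $\pi^{\ast}(P)$ or the sup/inf description of $\widehat{f}$.

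Your detour through $g=mf$ with $(mf)^{\ast}<mf$ can be made to work, but note a small gap: you need $m\geq 2$ to conclude $(g-g^{\ast})^{\ast}\notin P$ from $\widehat{g}(P)\geq m$ (so that $\widehat{g-g^{\ast}}(P)\geq m-1\geq 1$ and hence $\widehat{(g-g^{\ast})^{\ast}}(P)=1$). Condition~$(\mathfrak{b})$ only furnishes \emph{some} $m$ with $(mf)^{\ast}<mf$, possibly $m=1$; you would have to observe that any larger $m$ also works. More importantly, the whole manoeuvre is unnecessary once you notice that $\widehat{f^{\ast}}(P)=1$ already forces $f^{\ast}\notin P$ directly.
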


\begin{proof}
It's easy to see that the set in question, say $\mathcal{K}$, is closed in
$\mathcal{V}$. Now take $P\in\mathcal{K}$ and use Lemma \ref{Value} to see
that%
\[
P\in\mathrm{Val}\left(  g^{\ast}\right)  \cap\left[  \mathcal{P}\right]
_{f^{\ast}}=\mathrm{Val}\left(  f^{\ast}\right)  \cap\left[  \mathcal{P}%
\right]  _{g^{\ast}}%
\]
holds for some $g\in L^{+}$. This yields that $\mathcal{K\subset}%
\mathrm{Val}\left(  f^{\ast}\right)  $ which is compact (see \cite[Theorem
36.4 (i)]{LZ}).
\end{proof}

\section{The main result and its applications}

We have gathered along the previous sections all the ingredients we need to
prove the following general representation theorem.

\begin{theorem}
\label{Johnson-Kist}Let $L$ be an Archimedean vector lattice, $\mathcal{T}$ be
a maximal multi-truncation, and $\mathcal{Q}$ be a dense subset of
$\mathcal{P}$. Then the map $\Lambda:L\rightarrow C^{\infty}\left(
\mathcal{Q}_{\mathcal{T}}\right)  $ defined by%
\[
\Lambda\left(  f\right)  =\widehat{f}\text{\quad for all }f\in L
\]
is a one-to-one lattice homomorphism such that%
\[
\Lambda\left(  f^{\ast}\right)  =1_{\mathcal{Q}^{\ast}}\wedge\Lambda\left(
f\right)  \text{\quad for all }\ast\in\mathcal{T}\text{ and }f\in L^{+}.
\]

\end{theorem}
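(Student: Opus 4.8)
The plan is to assemble the theorem from the lemmas of the previous sections, the only real work being injectivity. First I would check that $\Lambda$ lands in $C^{\infty}(\mathcal{Q}_{\mathcal{T}})$ and is a linear lattice homomorphism. Membership in $C^{\infty}(\mathcal{Q}_{\mathcal{T}})$ is Lemma \ref{Topology}(i) together with Lemma \ref{sublattice}, which moreover tells us that $\widehat{L}$ is a vector sublattice of $C^{\infty}(\mathcal{Q}_{\mathcal{T}})$, so that pointwise sums, scalar multiples and finite lattice combinations of members of $\widehat{L}$ are again in $\widehat{L}$. To see that $\Lambda$ respects these operations, fix $P\in\mathcal{Q}_{\mathcal{T}}$; by Lemma \ref{Partition}(i) there is a unique $\ast\in\mathcal{T}$ with $P\in\mathcal{Q}^{\ast}$, and for any $u\in\pi^{\ast}(P)$ one has $u^{\ast}\geq(u-u^{\ast})^{\ast}\notin P$, hence $u^{\ast}\notin P$. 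Thus $f\mapsto\widehat{f}(P)$ is exactly the evaluation studied in \cite[Theorem 44.3]{LZ} for the prime ideal $P$ and the positive element $u^{\ast}$, and that result shows it is additive (where the sum is defined) and $\mathbb{R}$-homogeneous and preserves $\vee$, $\wedge$ and $|\cdot|$ into $\overline{\mathbb{R}}$. Since all operations on $C^{\infty}(\mathcal{Q}_{\mathcal{T}})$ are computed pointwise and $\widehat{L}$ is stable under them, this yields $\Lambda(f+g)=\Lambda(f)+\Lambda(g)$, $\Lambda(rf)=r\Lambda(f)$, $\Lambda(f\vee g)=\Lambda(f)\vee\Lambda(g)$ and $\Lambda(f\wedge g)=\Lambda(f)\wedge\Lambda(g)$ for all $f,g\in L$ and $r\in\mathbb{R}$.

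Next I would prove that $\Lambda$ is one-to-one. As $\Lambda$ is a linear lattice homomorphism its kernel is an ideal of $L$, so it is enough to show that $f=0$ whenever $f\in L^{+}$ and $\widehat{f}$ vanishes identically on $\mathcal{Q}_{\mathcal{T}}$. I claim that then $f^{\ast}=0$ for every $\ast\in\mathcal{T}$; granting this, maximality of $\mathcal{T}$ together with the Archimedean hypothesis forces $f=0$ via Lemma \ref{maximal}. To prove the claim, fix $\ast\in\mathcal{T}$. Density of $\mathcal{Q}$ in $\mathcal{P}$ gives $\bigcap_{P\in\mathcal{Q}}P=\{0\}$ by \cite[Theorem 36.1]{LZ}, so it suffices to show $f^{\ast}\in P$ for each $P\in\mathcal{Q}$. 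Suppose not, say $f^{\ast}\notin P$ with $P\in\mathcal{Q}$. Then $P\in[\mathcal{Q}]_{f^{\ast}}\subseteq\mathcal{Q}^{\ast}\subseteq\mathcal{Q}_{\mathcal{T}}$, so $\widehat{f}(P)=0$, and by Lemma \ref{BH}(i) there is $n\in\{1,2,\ldots\}$ with $nf\in\pi^{\ast}(P)$. Now Lemma \ref{One-Zero}(i) gives $\widehat{(nf)^{\ast}}(P)=1$, whereas Lemma \ref{Topology}(iii) combined with the homogeneity just established gives $\widehat{(nf)^{\ast}}(P)=(1_{\mathcal{Q}^{\ast}}\wedge\widehat{nf})(P)=1\wedge n\widehat{f}(P)=0$. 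This contradiction proves $f^{\ast}\in P$, hence $f^{\ast}=0$, and injectivity follows.

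Finally, the displayed identity $\Lambda(f^{\ast})=1_{\mathcal{Q}^{\ast}}\wedge\Lambda(f)$ for $\ast\in\mathcal{T}$ and $f\in L^{+}$ is just a restatement of Lemma \ref{Topology}(iii). The step I expect to be delicate is the injectivity argument: one must observe that if $\widehat{f}$ vanishes at a point $P$ of some $\mathcal{Q}^{\ast}$ then $f^{\ast}$ already lies in $P$, which is forced by reconciling the two evaluations of $\widehat{(nf)^{\ast}}(P)$ coming from Lemma \ref{One-Zero}(i) and Lemma \ref{Topology}(iii); once this is in place, density of $\mathcal{Q}$ annihilates $f^{\ast}$ for every $\ast\in\mathcal{T}$ and maximality of $\mathcal{T}$ annihilates $f$.
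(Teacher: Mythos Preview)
Your proof is correct and, for everything except injectivity, essentially identical to the paper's: both cite Lemmas \ref{Topology} and \ref{sublattice} together with \cite[Theorem 44.3]{LZ} for the lattice-homomorphism structure, and both read off the truncation identity from Lemma \ref{Topology}(iii).

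For injectivity the overall architecture is the same (reduce to $f\in L^{+}$ with $\widehat{f}=0$, show $f^{\ast}=0$ for all $\ast\in\mathcal{T}$, then invoke Lemma \ref{maximal}), but the way you kill $f^{\ast}$ differs from the paper. The paper works directly with the definition of $\widehat{f}(P)$: from $\widehat{f}(P)=0$ it extracts $(f-\tfrac{1}{k}u^{\ast})^{+}\in P$ for all $k$ and all $P\in[\mathcal{Q}]_{f^{\ast}}$, then uses density of $\mathcal{Q}$ plus \cite[Corollary 35.4]{LZ} to get $(f-\tfrac{1}{k}u^{\ast})^{+}\wedge f^{\ast}=0$, and finally the Archimedean property to conclude $f^{\ast}=0$. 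You instead recycle already-established lemmas: at any $P\in\mathcal{Q}$ with $f^{\ast}\notin P$ you pit $\widehat{(nf)^{\ast}}(P)=1$ from Lemma \ref{One-Zero}(i) against $\widehat{(nf)^{\ast}}(P)=1\wedge n\widehat{f}(P)=0$ from Lemma \ref{Topology}(iii), getting an immediate contradiction and hence $f^{\ast}\in\bigcap_{P\in\mathcal{Q}}P=\{0\}$. Your route is shorter and avoids the explicit Archimedean supremum computation and the appeal to \cite[Corollary 35.4]{LZ}; the paper's route is more self-contained in that it does not lean on Lemma \ref{Topology}(iii), whose proof already encodes part of the work.
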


\begin{proof}
In view of Lemmas \ref{Topology}-\ref{sublattice}, and Theorem 44.3 in
\cite{LZ}, it only remains for us to show that $\Lambda$ is one-to-one. Let
$f\in L$ such that $\widehat{f}\left(  P\right)  =0$ for all $P\in
\mathcal{Q}_{\mathcal{T}}$. We claim that $f=0$. To this end, we can assume
that $f\in L^{+}$. Suppose by contradiction that $f>0$. The rest of the proof
is somewhat reminiscent of the proof of Lemma \ref{sublattice}. By Lemma
\ref{maximal}, there exists $\ast\in\mathcal{T}$ such that $f^{\ast}>0$. Using
Lemma \ref{BH} $\mathrm{(i)}$, we can choose $n\in\left\{  1,2,...\right\}  $
such that $nf\in\pi^{\ast}\left(  P\right)  $ for any $P\in\left[
\mathcal{Q}\right]  _{f^{\ast}}$. Put $u=nf$ and take $P\in\left[
\mathcal{Q}\right]  _{f^{\ast}}$. Thus,%
\[
0=\widehat{f}\left(  P\right)  =\inf\left\{  \alpha\in\mathbb{R}:\left(
f-\alpha u^{\ast}\right)  ^{+}\in P\right\}  .
\]
Therefore,%
\[
\left(  f-\frac{1}{n}u^{\ast}\right)  ^{+}\in P\text{ for all }n\in\left\{
1,2,...\right\}  .
\]
We derive that%
\[
\left(  f-\frac{1}{n}u^{\ast}\right)  ^{+}\in%
%TCIMACRO{\dbigcap _{P\in\left[  \mathcal{Q}\right]  _{f^{\ast}}}}%
%BeginExpansion
{\displaystyle\bigcap_{P\in\left[  \mathcal{Q}\right]  _{f^{\ast}}}}
%EndExpansion
P\text{ for all }n\in\left\{  1,2,...\right\}  .
\]
Since $\underset{P\in\mathcal{Q}}{%
%TCIMACRO{\dbigcap }%
%BeginExpansion
{\displaystyle\bigcap}
%EndExpansion
}P=\left\{  0\right\}  $ (by density of $\mathcal{Q}$ in $\mathcal{P}$), we
can call \cite[Corollary 35.4]{LZ} to write%
\[
\left(  f-\frac{1}{n}u^{\ast}\right)  ^{+}\wedge f^{\ast}=0\text{ for all
}n\in\left\{  1,2,...\right\}  .
\]
Consequently,%
\[
f^{\ast}=f\wedge f^{\ast}=\inf\left\{  \left(  f-\frac{1}{n}u^{\ast}\right)
^{+}:n\in\left\{  1,2,...\right\}  \right\}  \wedge f^{\ast}=0
\]
(remember here that $L$ is Archimedean). This contradiction completes the
proof of the theorem.
\end{proof}

The maximal orthogonal set (also called a maximal disjoint system) in Theorem
5.1 in \cite{JK} (see also Theorem 44.4 in \cite{LZ}) gives raise to a maximal
multi-truncation in an obvious way. This shows directly that Theorem
\ref{Johnson-Kist} is a direct generalization of the classical Johnson-Kist
Representation Theorem. The following corollary is also a consequence of
Theorem \ref{Johnson-Kist} combined with Lemma \ref{Hausdorff} $\mathrm{(ii)}$.

\begin{corollary}
\label{pre-main}Let $L$ be an Archimedean vector lattice and $\mathcal{T}$ be
a maximal multi-truncation. Then the map $\Lambda:L\rightarrow C^{\infty
}\left(  \mathcal{V}\right)  $ defined by%
\[
\Lambda\left(  f\right)  =\widehat{f}\text{\quad for all }f\in L
\]
is a one-to-one lattice homomorphism such that%
\[
\Lambda\left(  f^{\ast}\right)  =1_{\mathcal{V}^{\ast}}\wedge\Lambda\left(
f\right)  \text{\quad for all }\ast\in\mathcal{T}\text{ and }f\in L^{+}.
\]

\end{corollary}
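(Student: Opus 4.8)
The plan is to apply Theorem \ref{Johnson-Kist} with the particular choice $\mathcal{Q}=\mathcal{V}$, so the whole argument reduces to checking that this choice is admissible and then translating notation. First I would confirm admissibility: since $L$ is Archimedean and $\mathcal{T}$ is maximal, Lemma \ref{Hausdorff} $\mathrm{(ii)}$ asserts exactly that $\mathcal{V}$ is dense in $\mathcal{P}$. Hence $\mathcal{V}$ is a legitimate instance of the ``dense subset $\mathcal{Q}$ of $\mathcal{P}$'' appearing in the hypotheses of Theorem \ref{Johnson-Kist}, and no further work is needed to invoke that theorem.

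Next I would unwind the notation. Recall from the remarks following Lemma \ref{Value} that $\left[\mathcal{V}\right]_{f^{\ast}}=\mathrm{Val}\left(f^{\ast}\right)$ for all $f\in L^{+}$, that $\mathcal{V}^{\ast}=\bigcup_{f\in L^{+}}\mathrm{Val}\left(f^{\ast}\right)$ for every $\ast\in\mathcal{T}$, and that $\mathcal{V}=\mathcal{V}_{\mathcal{T}}$. In other words, taking $\mathcal{Q}=\mathcal{V}$ yields $\mathcal{Q}_{\mathcal{T}}=\mathcal{V}$ and $\mathcal{Q}^{\ast}=\mathcal{V}^{\ast}$ for each $\ast\in\mathcal{T}$. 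Substituting these identifications into the conclusion of Theorem \ref{Johnson-Kist} gives precisely the statement of the corollary: the map $\Lambda:L\rightarrow C^{\infty}\left(\mathcal{V}\right)$, $\Lambda\left(f\right)=\widehat{f}$, is a one-to-one lattice homomorphism satisfying $\Lambda\left(f^{\ast}\right)=1_{\mathcal{V}^{\ast}}\wedge\Lambda\left(f\right)$ for all $\ast\in\mathcal{T}$ and $f\in L^{+}$.

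Since every step is a direct appeal to a result already established in the excerpt, I do not expect a genuine obstacle here; the corollary is a specialization rather than a new argument. The only point that requires even minor attention is verifying the density hypothesis of Theorem \ref{Johnson-Kist}, and this is handed to us verbatim by Lemma \ref{Hausdorff} $\mathrm{(ii)}$. One could optionally add a sentence recording that $\mathcal{V}$ carries the hull-kernel topology induced from $\mathcal{P}$ (as stipulated at the start of Section~3), so that ``$C^{\infty}\left(\mathcal{V}\right)$'' means what it should, but this is already built into the conventions of the paper.
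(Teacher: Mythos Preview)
Your proposal is correct and follows exactly the approach the paper takes: the corollary is stated there as an immediate consequence of Theorem~\ref{Johnson-Kist} combined with Lemma~\ref{Hausdorff}~$\mathrm{(ii)}$, and you have simply spelled out the notational identifications $\mathcal{Q}_{\mathcal{T}}=\mathcal{V}$ and $\mathcal{Q}^{\ast}=\mathcal{V}^{\ast}$ that make this work.
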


We are now about to state and prove the main theorem of this research. For, it
could be helpful to label the following definition.

\begin{definition}
\emph{A vector lattice along with a truncation (in the sense of Definition
\ref{trunc}) is called a }truncated vector lattice.
\end{definition}

We proceed to our main result.

\begin{theorem}
\label{main}For any Archimedean truncated vector lattice $L$, there exists a
locally compact Hausdorff space $\mathcal{X}$ such that

\begin{enumerate}
\item[\emph{(i)}] $L$ is \emph{(}lattice isomorphic with\emph{)} a vector
sublattice of $C^{\infty}\left(  \mathcal{X}\right)  $,

\item[\emph{(ii)}] $L$ separates points from closed sets in $\mathcal{X}$,

\item[\emph{(iii)}] There exists an open-closed set $\mathcal{Y}$ in
$\mathcal{X}$ such that
\[
f^{\ast}=1_{\mathcal{Y}}\wedge f\text{ for all }f\in L^{+}.
\]

\item[\emph{(iv)}] $L$ vanishes nowhere on $\mathcal{X}$ \emph{(}i.e., for
every $x\in\mathcal{X}$ there exists $f\in L$ such that $f\left(  x\right)
\neq0$\emph{).}

\item[\emph{(v)}] For all $f\in L$ and $\varepsilon\in\left(  0,\infty\right)
$, the set $\left\{  y\in\mathcal{Y}:\left\vert f\left(  y\right)  \right\vert
\geq\varepsilon\right\}  $ is compact.
\end{enumerate}
\end{theorem}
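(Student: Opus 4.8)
The plan is to assemble the results of Sections 2--4: choose the right spectrum, read off (i)--(iv) from the earlier lemmas, and use Lemma \ref{last} for the one substantive assertion (v). Let $*$ denote the truncation that $L$ carries. First I would invoke the remark following Lemma \ref{maximal} to embed $*$ in a maximal multi-truncation $\mathcal{T}$ on $L$ (so that $*\in\mathcal{T}$), and then take $\mathcal{X}=\mathcal{V}$, the spectrum of $L$ with respect to $\mathcal{T}$, together with $\mathcal{Y}=\mathcal{V}^{\ast}$. By Lemma \ref{Hausdorff}(i), $\mathcal{X}$ is locally compact and Hausdorff; and by Lemma \ref{Partition}(i), applied with $\mathcal{Q}=\mathcal{V}$ (so that $\mathcal{Q}_{\mathcal{T}}=\mathcal{V}$), the set $\mathcal{Y}=\mathcal{V}^{\ast}$ is open-closed in $\mathcal{X}$.

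Assertions (i)--(iv) should then follow quickly. Corollary \ref{pre-main} supplies a one-to-one lattice homomorphism $\Lambda\colon L\to C^{\infty}(\mathcal{X})$, $\Lambda(f)=\widehat{f}$, which identifies $L$ with the vector sublattice $\widehat{L}$ of $C^{\infty}(\mathcal{X})$ and satisfies $\Lambda(f^{\ast})=1_{\mathcal{Y}}\wedge\Lambda(f)$ for all $f\in L^{+}$; this is precisely (i) and (iii). Assertion (ii) is exactly Lemma \ref{Topology}(ii). For (iv), I would apply the separation property (ii) to the closed set $\mathcal{F}=\emptyset$ and to an arbitrary point $x\in\mathcal{X}$: this produces $f\in L^{+}$ with $\widehat{f}(x)=1\neq0$, so $L$ vanishes nowhere on $\mathcal{X}$.

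The only assertion that requires genuine work is (v), and this is where Lemma \ref{last} does the job. Given $f\in L$ and $\varepsilon\in(0,\infty)$, set $h=\varepsilon^{-1}|f|\in L^{+}$. Since $\Lambda$ is linear and a lattice homomorphism, $\widehat{h}=\varepsilon^{-1}|\widehat{f}|$, so that $\{\,y\in\mathcal{Y}:|f(y)|\geq\varepsilon\,\}=\{\,P\in\mathcal{V}^{\ast}:\widehat{h}(P)\geq1\,\}$. By Lemma \ref{Topology}(iii), $\widehat{h^{\ast}}=1_{\mathcal{V}^{\ast}}\wedge\widehat{h}$, hence for $P\in\mathcal{V}^{\ast}$ the value $\widehat{h^{\ast}}(P)=\widehat{h}(P)\wedge1$ equals $1$ precisely when $\widehat{h}(P)\geq1$ (this stays correct even if $\widehat{h}(P)=\infty$). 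Therefore the set in (v) coincides with $\{\,P\in\mathcal{V}^{\ast}:\widehat{h^{\ast}}(P)=1\,\}$, which is compact by Lemma \ref{last} when $f\neq0$ (then $h>0$) and is empty, hence compact, when $f=0$. The only delicate point in the whole argument is precisely this recasting of $|\widehat{f}|\geq\varepsilon$ as $\widehat{h^{\ast}}=1$, which is what lets Lemma \ref{last} apply; everything else is bookkeeping, once one is careful about the infinite values of $\widehat{h}$ and the trivial case $f=0$.
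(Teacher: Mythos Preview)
Your proposal is correct and follows essentially the same route as the paper: embed $*$ in a maximal multi-truncation via Zorn, take $\mathcal{X}=\mathcal{V}$ and $\mathcal{Y}=\mathcal{V}^{\ast}$, read off (i)--(iii) from Corollary \ref{pre-main} and Lemma \ref{Topology}, and reduce (v) to Lemma \ref{last} by rewriting $\{\,y\in\mathcal{Y}:|f(y)|\geq\varepsilon\,\}$ as $\{\,P\in\mathcal{V}^{\ast}:\widehat{(\varepsilon^{-1}|f|)^{\ast}}(P)=1\,\}$. The only cosmetic difference is that for (iv) the paper cites Lemma \ref{One-Zero} directly (each $P\in\mathcal{V}$ lies in some $\mathcal{Q}^{\ast}$, and then $\widehat{u^{\ast}}(P)=1$ for any $u\in\pi^{\ast}(P)$), whereas you deduce it from (ii) applied to $\mathcal{F}=\emptyset$; both arguments are one-liners.
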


\begin{proof}
Let $\ast$ denote the truncation on $L$. As noticed somewhere before, a
standard argument based on the Zorn's Lemma shows that there exists a maximal
multi-truncation $\mathcal{T}$ on $L$ containing $\ast$. Put $\mathcal{X}%
=\mathcal{V}_{\mathcal{T}}=\mathcal{V}$ and $\mathcal{Y}=\mathcal{V}^{\ast}$.
Hence, Corollary \ref{pre-main} gives the assertion $\mathrm{(i)}$ and
$\mathrm{(iii)}$. Furthermore, the assertions $\mathrm{(ii)}$ and
$\mathrm{(iv)}$ follow directly from Lemma \ref{Topology} $\mathrm{(ii)}$ and
Lemma \ref{One-Zero}, respectively$\mathrm{.}$ It remains to establish the
assertion $\mathrm{(v)}$. Choose $f\in L$ and $\varepsilon\in\left(
0,\infty\right)  $. By Lemma \ref{last}, the set%
\[
\mathcal{K}=\left\{  P\in\mathcal{V}^{\ast}:\widehat{\left(  \frac
{1}{\varepsilon}f\right)  ^{\ast}}\left(  P\right)  =1\right\}
\]
is a compact set. Observe now that%
\[
\mathcal{K}=\left\{  y\in\mathcal{Y}:\left(  1_{\mathcal{Y}}\wedge\frac
{1}{\varepsilon}\left\vert f\right\vert \right)  \left(  y\right)  =1\right\}
=\left\{  y\in\mathcal{Y}:\left\vert f\left(  y\right)  \right\vert
\geq\varepsilon\right\}  .
\]
This completes the proof of the theorem.
\end{proof}

Next, we show how we can apply Theorem \ref{main} to get two representation
theorems from the existing literature. To do this, we first recall the
following definition from the introduction.

\begin{definition}
\emph{The truncation}\textsl{ }$\ast$ \emph{on the}\textsl{ }$L$ \emph{is said
to be }weak\emph{\ if }$f=0$ \emph{provided }$f\in L^{+}$ \emph{and }$f^{\ast
}=0$\emph{. In this situation, we call }$L$\emph{ a}\textsl{ }weakly truncated
vector lattice\textsl{.}
\end{definition}

The main part of the following representation theorem is originally due to
Ball (see Theorem 5.3.6 in \cite{B1}). Here, we furnish an alternative way to
get the result.

\begin{theorem}
\label{Ball Rep}If $L$ is an Archimedean weakly truncated vector lattice, then
there exists a locally compact Hausdorff space $\mathcal{X}$ such that

\begin{enumerate}
\item[\emph{(i)}] $L$ is \emph{(}lattice isomorphic with\emph{)} a vector
lattice of functions in $C^{\infty}\left(  \mathcal{X}\right)  $,

\item[\emph{(ii)}] $L$ separates points from closed sets in $\mathcal{X}$,

\item[\emph{(iii)}] $f^{\ast}=1\wedge f$ for all $f\in L^{+}$,

\item[\emph{(iv)}] $L$ vanishes nowhere on $\mathcal{X}$, and

\item[\emph{(v)}] Any $f\in L$ vanishes at infinity.
\end{enumerate}
\end{theorem}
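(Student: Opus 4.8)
The plan is to derive Theorem \ref{Ball Rep} as a special case of Theorem \ref{main}, the only new ingredient being the observation that the weakness hypothesis $(\mathfrak{w})$ forces the set $\mathcal{Y}$ to be the whole space $\mathcal{X}$. First I would invoke Theorem \ref{main} to obtain a locally compact Hausdorff space $\mathcal{X}=\mathcal{V}$, an open-closed subset $\mathcal{Y}=\mathcal{V}^{\ast}$, and the one-to-one lattice homomorphism $\Lambda:L\to C^{\infty}(\mathcal{X})$ with $\Lambda(f^{\ast})=1_{\mathcal{Y}}\wedge\Lambda(f)$ for all $f\in L^{+}$, together with properties (ii), (iv), (v). Assertions (ii) and (iv) of Theorem \ref{Ball Rep} are then immediate, and (v) follows from Theorem \ref{main}(v) once we know $\mathcal{Y}=\mathcal{X}$, since then every $\{x\in\mathcal{X}:|f(x)|\geq\varepsilon\}$ is compact, which is exactly the statement that $f$ vanishes at infinity.

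So the heart of the matter is to show $\mathcal{V}=\mathcal{V}^{\ast}$, equivalently $\mathcal{V}^{\rtimes}=\emptyset$ for every $\rtimes\in\mathcal{T}$ with $\rtimes\neq\ast$. I would argue that the maximal multi-truncation $\mathcal{T}$ containing the given weak truncation $\ast$ must in fact equal $\{\ast\}$. Indeed, suppose $\rtimes\in\mathcal{T}$ with $\rtimes\neq\ast$ and pick any $f\in L^{+}$. The defining disjointness property of a multi-truncation gives $f^{\rtimes}\wedge g^{\ast}=0$ for all $g\in L^{+}$; taking $g=f$ yields $f^{\rtimes}\wedge f^{\ast}=0$. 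Now $f^{\rtimes}\leq f$ by axiom $(\mathfrak{a})$, so $f^{\rtimes}\wedge f=f^{\rtimes}$, and one computes $(f^{\rtimes})^{\ast}=(f^{\rtimes})^{\ast}\wedge f^{\rtimes}\leq f^{\ast}\wedge f^{\rtimes}$ using $(\mathfrak{a}')$ in the form $(f^{\rtimes})^{\ast}\wedge f = f^{\ast}\wedge f^{\rtimes}$ — wait, more directly: by $(\mathfrak{a}')$ applied to the pair $f^{\rtimes},f$ one has $(f^{\rtimes})^{\ast}\wedge f = f^{\ast}\wedge f^{\rtimes} = 0$, and since $(f^{\rtimes})^{\ast}\leq f^{\rtimes}\leq f$ this gives $(f^{\rtimes})^{\ast}=(f^{\rtimes})^{\ast}\wedge f=0$. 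Thus $(f^{\rtimes})^{\ast}=0$, and the weakness of $\ast$ forces $f^{\rtimes}=0$. Since $f\in L^{+}$ was arbitrary, $\rtimes$ vanishes identically on $L^{+}$, contradicting the requirement that a truncation be nonzero. Hence $\mathcal{T}=\{\ast\}$.

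With $\mathcal{T}=\{\ast\}$ we get $\mathcal{V}=\mathcal{V}^{\ast}=\mathcal{Y}$, so $1_{\mathcal{Y}}$ is the constant function $1$ on $\mathcal{X}$; assertion (iii) of Theorem \ref{main} becomes $f^{\ast}=1\wedge f$ for all $f\in L^{+}$, which is assertion (iii) here, and $\widehat{L}=\Lambda(L)$ is a genuine vector sublattice of $C^{\infty}(\mathcal{X})$ as already provided by Theorem \ref{main}(i) (Lemma \ref{sublattice} with $\mathcal{Q}=\mathcal{P}$). I do not expect any real obstacle: the only step requiring care is the short lattice computation showing $(f^{\rtimes})^{\ast}=0$, where one must be attentive to apply $(\mathfrak{a})$ or its equivalent $(\mathfrak{a}')$ to the correct ordered pair of vectors and to use that ranges of distinct members of a multi-truncation are orthogonal; everything else is bookkeeping and citation of Theorem \ref{main}.
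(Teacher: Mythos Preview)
Your proposal is correct, and it takes a genuinely different route from the paper's proof. The paper argues \emph{topologically}: assuming $\mathcal{Y}\subsetneq\mathcal{X}$, it picks $x\in\mathcal{X}\setminus\mathcal{Y}$, invokes the separation property (ii) to produce $f\in L^{+}$ with $f(x)=1$ and $f\vert_{\mathcal{Y}}=0$, and then observes $f^{\ast}=1_{\mathcal{Y}}\wedge f=0$ while $f\neq0$, contradicting weakness. You instead argue \emph{algebraically}, showing that the maximal multi-truncation $\mathcal{T}$ produced inside Theorem~\ref{main} must collapse to $\{\ast\}$; your computation $(f^{\rtimes})^{\ast}\wedge f=f^{\ast}\wedge f^{\rtimes}=0$ together with $(f^{\rtimes})^{\ast}\leq f$ is clean and correct. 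Your route is more elementary in that it never uses the separation property to establish $\mathcal{Y}=\mathcal{X}$, and it could be shortened further: by Lemma~\ref{maximal}, weakness of $\ast$ says precisely that the singleton $\{\ast\}$ already satisfies the maximality criterion, so one may simply take $\mathcal{T}=\{\ast\}$ from the outset and bypass the Zorn step altogether. The paper's route, by contrast, treats Theorem~\ref{main} as a black box and works entirely at the level of the representation, which keeps the argument self-contained once that theorem is in hand.
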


\begin{proof}
By Theorem \ref{main}, there exists a locally compact Hausdorff space
$\mathcal{X}$ such that the conditions $\mathrm{(i)}$,\textrm{\ (ii)}, and
$\mathrm{(iv)}$ are verified. Also, by $\mathrm{(iii)}$ in the same theorem,
there is an open-closed set $\mathcal{Y}$ of $\mathcal{X}$ such that $f^{\ast
}=1_{\mathcal{Y}}\wedge f$ for all $f\in L^{+}$. We claim that $\mathcal{Y}%
=\mathcal{X}$. Otherwise, there exists $x\in\mathcal{X}$ with $x\notin
\mathcal{Y}$. Since $\mathcal{Y}$ is a closed set in $\mathcal{X}$ and $L$
separates points from closed sets in $\mathcal{X}$, there exists $f\in L^{+}$
such that $f\left(  x\right)  =1$ and $f\left(  y\right)  =0$ for all
$y\in\mathcal{Y}$. But then $f^{\ast}=1_{\mathcal{Y}}\wedge f=0$, while
$f\neq0$. This contradicts the fact that the truncation on $L$ is weak and
leads to $\mathrm{(iii)}$. This together with Theorem \ref{main}
$\mathrm{(v)}$ proves the last assertion of the theorem.
\end{proof}

We conclude the paper with a representation theorem for strongly truncated
vector lattices. Although this theorem is an application of the main result of
this paper, we shall use also a Stone-Weierstrass type approximation theorem
recently obtained in \cite{BH}. First, let's introduce strongly truncated
vector lattices.

\begin{definition}
\emph{The truncation }$\ast$\emph{ on }$L$ \emph{is said to be }%
strong\emph{\ if}\textsl{\ }\emph{for every }$f\in L^{+}$\emph{\ there exists
}$n\in\left\{  1,2,...\right\}  $\emph{\ such that }$\left(  nf\right)
^{\ast}=nf$\emph{. In this case, we call }$L$\emph{ a }strongly truncated
vector lattice\textsl{.}
\end{definition}

Recall that if $\mathcal{X}$ is a locally compact Hausdorff space, then
$C_{0}\left(  \mathcal{X}\right)  $ denotes the Banach lattice of all
real-valued continuous functions on $\mathcal{X}$ vanishing at infinity.
Hence, if $L$ is a vector sublattice of $C_{0}\left(  \mathcal{X}\right)  $
such that $1\wedge f\in L$ for all $f\in L$, then $L$ is uniformly dense in
$C_{0}\left(  \mathcal{X}\right)  $ if and only if $L$ vanishes nowhere and
separates the points of $\mathcal{X}$ (for the proof, see Lemma 4.1 in
\cite{BH}).

\begin{theorem}
If $L$ is a strongly truncated vector lattice then there exists a locally
compact Hausdorff space $\mathcal{X}$ such that

\begin{enumerate}
\item[\emph{(i)}] $L$ is \emph{(}lattice isomorphic with\emph{)} a uniformly
dense vector sublattice of $C_{0}\left(  \mathcal{X}\right)  $,

\item[\emph{(ii)}] $L$ separates points from closed sets in $\mathcal{X}$, and

\item[\emph{(iii)}] $f^{\ast}=1\wedge f$ for all $f\in L^{+}.$
\end{enumerate}
\end{theorem}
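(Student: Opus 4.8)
The plan is to read the result off Theorem \ref{Ball Rep} and then to lift the representation from $C^{\infty}$ to $C_{0}$ using the strong hypothesis, the last step being the Stone-Weierstrass type criterion recalled just before the statement (Lemma 4.1 in \cite{BH}).

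First I would check the (routine) fact that a strongly truncated vector lattice is automatically weakly truncated and Archimedean, so that Theorem \ref{Ball Rep} applies. For the weak part: if $f\in L^{+}$ and $f^{\ast}=0$, choose $n$ with $\left(nf\right)^{\ast}=nf$ from $\left(\mathfrak{s}\right)$; a routine induction based on Lemma \ref{Birkhoff} $\mathrm{(iii)}$ (as in the proof of Lemma \ref{Ball} $\mathrm{(iii)}$) gives $\left(nf\right)^{\ast}\leq nf^{\ast}=0$, hence $f=0$. For the Archimedean part: if $f,g\in L^{+}$ with $nf\leq g$ for all $n$, pick $m$ with $\left(mg\right)^{\ast}=mg$; then $nf\leq g\leq mg=\left(mg\right)^{\ast}$, so $nf\wedge\left(mg\right)^{\ast}=nf$, and $\left(\mathfrak{a}\right)$ forces $nf\leq\left(nf\right)^{\ast}\leq nf$, i.e. $\left(nf\right)^{\ast}=nf$ for every $n$; then $\left(\mathfrak{b}\right)$ yields $f=0$. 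Now Theorem \ref{Ball Rep} produces a locally compact Hausdorff space $\mathcal{X}$ and a one-to-one lattice homomorphism $\Lambda:L\rightarrow C^{\infty}\left(\mathcal{X}\right)$, $\Lambda\left(f\right)=\widehat{f}$, such that $\widehat{L}:=\Lambda\left(L\right)$ separates points from closed sets in $\mathcal{X}$, $\widehat{f^{\ast}}=1\wedge\widehat{f}$ for $f\in L^{+}$, $\widehat{L}$ vanishes nowhere, and each $\widehat{f}$ vanishes at infinity. In particular assertions $\mathrm{(ii)}$ and $\mathrm{(iii)}$ are already secured.

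The core step is to show $\widehat{L}\subseteq C_{0}\left(\mathcal{X}\right)$, i.e. that every $\widehat{f}$ is a finite real-valued function. For $f\in L^{+}$, condition $\left(\mathfrak{s}\right)$ gives $n$ with $\left(nf\right)^{\ast}=nf$; applying $\Lambda$ and using $\widehat{\left(nf\right)^{\ast}}=1\wedge n\widehat{f}$ yields $1\wedge n\widehat{f}=n\widehat{f}$, so the (nonnegative) function $\widehat{f}$ is bounded. For arbitrary $f\in L$ we have $\left\vert\widehat{f}\right\vert=\widehat{\left\vert f\right\vert}$ with $\left\vert f\right\vert\in L^{+}$, so $\widehat{f}$ is bounded, continuous and real-valued; being also vanishing at infinity, $\widehat{f}\in C_{0}\left(\mathcal{X}\right)$. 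Hence $\widehat{L}$ is a vector sublattice of the Banach lattice $C_{0}\left(\mathcal{X}\right)$.

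Finally I would verify the hypotheses of Lemma 4.1 in \cite{BH}. The sublattice $\widehat{L}$ is closed under $h\mapsto1\wedge h$: for $h=\widehat{g}$ with $g\in L$ one computes $1\wedge\widehat{g}=\widehat{g}-\bigl(\widehat{g^{+}}-\widehat{\left(g^{+}\right)^{\ast}}\bigr)=\widehat{\left(g^{+}\right)^{\ast}-g^{-}}\in\widehat{L}$. Moreover $\widehat{L}$ vanishes nowhere, and since it separates points from closed sets in the Hausdorff space $\mathcal{X}$ it separates the points of $\mathcal{X}$. The quoted Stone-Weierstrass type theorem then gives that $\widehat{L}$ is uniformly dense in $C_{0}\left(\mathcal{X}\right)$, which is $\mathrm{(i)}$; combined with $\mathrm{(ii)}$ and $\mathrm{(iii)}$ from the previous paragraph, this finishes the proof. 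The only point that is genuinely more than reassembling earlier results is the passage from $C^{\infty}$ to $C_{0}$, i.e. the finiteness of the $\widehat{f}$, and this is exactly where the strong axiom $\left(\mathfrak{s}\right)$ is used; beyond importing the external approximation theorem, I do not anticipate a serious obstacle.
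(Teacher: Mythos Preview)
Your proposal is correct and follows essentially the same route as the paper: establish that a strongly truncated vector lattice is Archimedean and weakly truncated, invoke Theorem \ref{Ball Rep}, use $\left(\mathfrak{s}\right)$ to pass from $C^{\infty}$ to $C_{0}$, and conclude by the Stone-Weierstrass type criterion from \cite{BH}. You are in fact slightly more explicit than the paper in two places (the verification that strong implies weak, and the check that $\widehat{L}$ is closed under $h\mapsto 1\wedge h$ for non-positive $h$), but the architecture of the argument is identical.
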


\begin{proof}
First, let's see that $L$ is Archimedean. Pick $f,g\in L^{+}$ such that
$nf\leq g$ for all $n\in\left\{  1,2,...\right\}  $. Since the truncation is
strong, we can find $p\in\left\{  1,2,...\right\}  $ such that $\left(
pg\right)  ^{\ast}=pg$. Therefore, if $n\in\left\{  1,2,...\right\}  $ then
$0\leq nf\leq pg=\left(  pg\right)  ^{\ast}$ from which we derive that%
\[
\left(  nf\right)  ^{\ast}=nf\text{ for all }n\in\left\{  1,2,...\right\}  .
\]
This means that $f=0$ and so $L$ is Archimedean, as desired. Moreover, it is
readily checked that any strongly truncated vector lattice is a weakly
truncated vector lattice. So, by Theorem \ref{Ball Rep}, we can say that $L$
is (lattice isomorphic with) a vector sublattice of $C^{\infty}\left(
\mathcal{X}\right)  $ for some locally compact Hausdorff space $\mathcal{X}$
such that $\mathrm{(ii)}$ and $\mathrm{(iii)}$ hold. Moreover, the fact that
the truncation is strong together with the condition $\mathrm{(iii)}$ yields
that all \textquotedblleft functions\textquotedblright\ in $L$ are bounded and
so, using Theorem \ref{Ball Rep} $\mathrm{(v)}$, we derive that $L$ is a
vector sublattice of $C_{0}\left(  \mathcal{X}\right)  $. Now, observe that
that $L$ vanishes nowhere and separates the points of $\mathcal{X}$ (where we
use Theorem \ref{Ball Rep} $\mathrm{(ii)}$ and $\mathrm{(iv)}$). The
aforementioned Stone-Weierstrass Approximation Type Theorem ends the proof.
\end{proof}


\begin{thebibliography}{99}                                                                                               %


\bibitem {AB}C. D. Aliprantis and O. Burkinshaw, \textit{Positive Operators},
Springer-Verlag, Dordrecht, 2006.

\bibitem {B1}R. N. Ball, Truncated abelian lattice-ordered groups I: The
pointed (Yosida) representation, \textit{Topology Appl}., 162 (2014), 43--65.

\bibitem {B2}R. N. Ball, Truncated abelian lattice-ordered groups II: The the
pointfree (Madden) representation, \textit{Topology. Appl.}, 162 (2014), 56-86.

\bibitem {BKW}A. Bigard, K. Keimel, and S. Wolfenstein, \textit{Groupes et
Anneaux R\'{e}ticul\'{e}s}, Lecture Notes Math. 608, Springer Verlag,
Berlin-Heidelberg-New York, $1977$.

\bibitem {BH}K. Boulabiar and R. Hajji, Representation of strongly truncated
Riesz spaces, \textit{Indag. Math.,} To appear.

\bibitem {F06}D. H. Fremlin, \textit{Measure Theory}. Vol. 4, Torres Fremlin,
Colchester, 2006.

\bibitem {F74}D. H. Fremlin, \textit{Topological Riesz spaces and Measure
Theory}, Cambridge Univ. Press, Cambridge, 1974.

\bibitem {D}M. Darnel, \textit{Theory of Lattice-Ordered Groups}, Marcel
Dekker, New York, 1995.

\bibitem {JK}D. G. Johnson and J. E. Kist, Prime ideals in vector lattices,
\textit{Can. J. Math}., 14 (1962), 517-528.

\bibitem {K}H. K\"{o}nig, \textit{Measure and Integration}, Springer-Verlag,
Berlin-Heidelberg-New York, 1997.

\bibitem {L}W. A. J. Luxemburg, \textit{Some aspects of the theory of Riesz
spaces}, Univ. Arkansas, Lecture Notes Math., 4. University of Arkansas,
Fayetteville, 1979.

\bibitem {LZ}W. A. J. Luxemburg and A. C. Zaanen, \textit{Riesz Spaces }I,
North-Holland, Amsterdam-London, 1971.

\bibitem {S}M. H. Stone, Notes on integration I, \textit{Proc. Nat. Acad. Sci.
USA}, 34 (1948), 336--342.
\end{thebibliography}
\end{document}